\documentclass[12pt]{article}
\usepackage[margin=1in]{geometry}
\usepackage{multirow}
\usepackage{graphicx,amsmath,amsthm,comment,framed,algorithm,algorithmic,subcaption,url,bm}
\title{A Strict Complementarity Approach to Error Bound and Sensitivity of Solution of Conic Programs}
\author{Lijun Ding\footnote{Wisconsin Institute for Discovery,, University of Wisconsin - Madison,
		Madison, WI 53715, USA;
		\texttt{lijunbrianding@gmail.edu}}  \hspace{0.8pt}  and Madeleine Udell\footnote{Management Science and Engineering, Stanford University, Stanford, CA 94305, USA;
		\texttt{udell@stanford.edu}    }
}
\usepackage[usenames,dvipsnames]{xcolor}
\definecolor{purple}{rgb}{0.3,0.0,.4}


\usepackage{thmtools}
\usepackage{thm-restate}
\usepackage{hyperref}
\usepackage{xspace}
\usepackage{amssymb}
\usepackage{pifont}
\usepackage{accents}

\newcommand{\defn}{:\,=}
\newcommand{\BEAS}{\begin{eqnarray*}}
\newcommand{\EEAS}{\end{eqnarray*}}
\newcommand{\BEA}{\begin{eqnarray}}
\newcommand{\EEA}{\end{eqnarray}}
\newcommand{\BEQ}{\begin{equation}}
\newcommand{\EEQ}{\end{equation}}
\newcommand{\BIT}{\begin{itemize}}
\newcommand{\EIT}{\end{itemize}}
\newcommand{\BNUM}{\begin{enumerate}}
\newcommand{\ENUM}{\end{enumerate}}

\newcommand{\beas}{\begin{eqnarray*}}
\newcommand{\eeas}{\end{eqnarray*}}
\newcommand{\bea}{\begin{eqnarray}}
\newcommand{\eea}{\end{eqnarray}}
\newcommand{\beq}{\begin{equation}}
\newcommand{\eeq}{\end{equation}}
\newcommand{\bit}{\begin{itemize}}
\newcommand{\eit}{\end{itemize}}
\newcommand{\ben}{\begin{enumerate}}
\newcommand{\een}{\end{enumerate}}

\newcommand{\ba}{\begin{array}}
\newcommand{\ea}{\end{array}}
\newcommand{\bbm}{\begin{bmatrix}}
\newcommand{\ebm}{\end{bmatrix}}



\newcommand{\eg}{e.g., }
\newcommand{\ie}{i.e., }

\newcommand{\ones}{\mathbf 1}

\newcommand{\reals}{{\mbox{\bf R}}}

\newcommand{\sym}{{\mbox{\bf S}}}  

\newcommand{\Span}{\mathop{\bf span}}
\newcommand{\range}{\mathop{\bf range}}
\newcommand{\rank}{\mathop{\bf rank}}
\newcommand{\nullspace}{{\mathop {\bf nullspace}}}
\newcommand{\tr}{\mathop{\bf tr}}

\newcommand{\diag}{\mathop{\bf diag}}

\newcommand{\nnzero}{\mathbf{nnz}}

\newcommand{\twonorm}[1]{\left\|#1\right\|_2}
\newcommand{\norm}[1]{\left\|#1\right\|}
\newcommand{\fronorm}[1]{\left\|#1\right\|_{\mbox{\tiny{F}}}}
\newcommand{\opnorm}[1]{\left\|#1\right\|_{\mbox{\tiny{\textup{op}}}}}
\newcommand{\nucnorm}[1]{\left\|#1\right\|_*}



\newcommand{\dist}{\mathop{\bf dist{}}}

\newcommand{\argmin}{\mathop{\rm argmin}}

\newcommand{\intr}{\mathop{\bf int}}
\newcommand{\relint}{\mathop{\bf rel int}}




\theoremstyle{definition} 
\newtheorem{theorem}{Theorem}
\newtheorem{lemma}{Lemma}
\newtheorem{corollary}{Corollary}

\newtheorem{definition}{Definition}
\newtheorem{remark}{Remark}
\newtheorem{example}{Example}
%



\newcommand{\cone}{\mathcal{K}}
\newcommand{\Vspace}{\mathcal{V}}
\newcommand{\Amap}{\mathcal{A}}
\newcommand{\bvector}{b}
\newcommand{\cost}{c}
\newcommand{\DeltaA}{\Delta \Amap}
\newcommand{\Deltab}{\Delta b}
\newcommand{\Deltac}{\Delta c}
\newcommand{\inprod}[2]{\langle #1, #2 \rangle}
\newcommand{\xset}{\mathcal{X}_\star} 
\newcommand{\yset}{\mathcal{Y}_\star}
\newcommand{\pval}{p_\star}

\newcommand{\optgap}{\epsilon_{\mbox{\tiny opt}}}
\newcommand{\conefeas}[1]{#1_{-}}
\newcommand{\conpart}[1]{#1_{+}}
\newcommand{\bfx}{\mathbf{x}}

\newcommand{\onenorm}[1]{\|#1\|_1}
\newcommand{\rel}[1]{\mbox{rel}(#1)}
\newcommand{\affine}[1]{\mbox{affine}(#1)}
\newcommand{\SOC}[1]{\mbox{SOC}_{#1}}
\newcommand{\xsol}{x_\star} 
\newcommand{\ssol}{s_\star }
\newcommand{\ssoll}[1]{s_{\star,#1}}
\newcommand{\checkssol}{\check{s}_{\star}} 

\newcommand{\Xsol}{X_\star}
\newcommand{\Ssol}{S_\star}
\newcommand{\ysol}{y_\star }
\newcommand{\proj}[1]{\mathcal{P}_{#1}}

\newcommand{\dm}{n}
\newcommand{\cons}{m}

\newcommand{\hyperplane}{\mathcal{H}}
\newcommand{\Fface}{\mathcal{F}}
\newcommand{\EE}{\mathbf{E}}
\newcommand{\FF}{\mathbf{F}}
\usepackage{xr}

\begin{document}
	
	\maketitle
	\begin{abstract}
		In this paper, we provide an elementary, geometric, and unified framework
		to analyze conic programs
		that we call the strict complementarity approach.
		This framework allows us to
		establish error bounds and quantify the sensitivity of the solution.
		The framework uses three classical ideas from convex geometry and
		linear algebra:
		linear regularity of convex sets,
		facial reduction,
		and orthogonal decomposition.
		We show how to use this framework to derive error bounds
		for linear programming (LP), second order cone programming (SOCP),
		and semidefinite programming (SDP).
	\end{abstract}
	
	\section{Introduction}
	Given two finite dimensional Euclidean spaces $\EE$ and $\FF$,
	each equipped with an inner product denoted as $\inprod{\cdot}{\cdot}$,
	we consider a conic program in standard form with decision variable $x\in \EE$:
	\begin{equation}\tag{$\mathcal{P}$}
		\begin{aligned}\label{opt: primalcone}
			& {\text{minimize}}
			& & \inprod{\cost}{x} \\
			& \text{subject to} & &  \Amap x = \bvector,\\
			& & & x\in \cone.
		\end{aligned}
	\end{equation}
	Here the problem data comprises a linear map $\Amap:\EE \rightarrow \FF$,
	a right hand side vector $\bvector\in \FF$, and a cost vector $c\in \EE$.
	The cone $\cone \subset \EE$ is proper \footnote{A cone $\cone$ is proper if it is closed, convex, and pointed.}.
	The solution set and optimal value of \eqref{opt: primalcone} are denoted as $\xset$ and $\pval$ respectively. When $\cone$ is the nonnegative orthant $\reals_+^\dm$, the second order cone $\SOC{\dm}$,
	or the set of positive semidefinite matrices $\sym_+^\dm$,
	we call the corresponding problem \eqref{opt: primalcone} an LP, SOCP, or SDP, respectively.
	
	In this paper, we provide an elementary framework based on \emph{strict complementarity} (see Section \ref{sec: assumptions})
	to establish \emph{error bounds} and quantify the \emph{sensitivity of the solution} of Problem \eqref{opt: primalcone}.
	In the following, $\twonorm{\cdot}$ denotes
	the Euclidean norm induced by the inner product, while $\norm{\cdot}$ is a
	generic norm that will be specified when we instantiate these bounds later
	in the paper.
	\begin{itemize}
		\item \textbf{Error bound:} Given $x\in \EE$, define three error metrics:
		suboptimality $\optgap(x)\defn \inprod{c}{x}-\pval$,
		linear infeasibility $\Amap x -b$,
		and conic infeasibility as $\conefeas{x}=x -\conpart{x}$,
		where the conic part $\conpart{x}:\,= \proj{\cone}(x)$.\footnote{
			The orthogonal projector $\proj{\cone}$ is defined as
			$ \proj {\cone} (x)= \argmin_{y\in \cone}\twonorm{y-x}$.
		}
		These errors are easy to measure, while the distance of a given point $x$ to the solution is not.
		This paper shows how to establish an error bound for \eqref{opt: primalcone}
		that bounds the distance to the solution in terms of these measurable error metrics,
		for some constants $c_i,i=1,2,3$ and exponent $p>0$	independent of $x$:
		\begin{equation}\tag{ERB}
			\begin{aligned}\label{eq: errorbound}
				\dist(x,\xset)^p\leq c_1 |\optgap(x)| + c_2 \norm{\Amap x-b} +c_3 \norm{\conefeas{x}},
			\end{aligned}
		\end{equation}
		where $	\dist(x,\xset):= \inf_{\xsol \in \xset}\twonorm{x-\xsol}$ is the distance to $\xsol$.
		
		\item \textbf{Sensitivity of the solution:}
		We often wish to understand how the solution of the problem changes with perturbations
		of the problem data.
		Given new problem data $(c',\Amap',b')\in \EE\times \mathbf{L}(\EE,\FF)\times\FF$ for
		Problem \eqref{opt: primalcone},
		where $\mathbf{L}(\EE,\FF)$ is the set of linear maps from $\EE$ to $\FF$,
		Problem \eqref{opt: primalcone} admits a new optimal solution set $\xset'$.
		This paper shows how to quantify the sensitivity of the solution,
		for some constants $c'_i,i=1,2,3$ and exponent $p'>0$, via the inequality
		\begin{equation}\tag{SSB}  \label{eq: sensitivity}
			\textbf{dist}^{p'}(\xset,\xset')\leq  c_1'\norm{c-c'}+c_2'\norm{\Amap - \Amap'}+c_3'\norm{b-b'},
		\end{equation}
		where $ \textbf{dist}(\xset,\xset'):\,= \inf_{\xsol\in \xset,x'\in \xset'}\twonorm{\xsol-\xsol'}$
		is the distance between solution sets.
		In fact, once an error bound of the form \eqref{eq: errorbound} is available,
		we can prove an inequality of this form
		by bounding the error metrics of the new solution $\xsol'\in \xset'$
		with respect to the original problem data $(c, \Amap, b)$ 
		in terms of
		the perturbation $(c-c',\Amap - \Amap',b-b')$.
	\end{itemize}
	
	\paragraph{Importance of the error bound and sensitivity of solution.}
	The error bound and sensitivity of the solution
	can be regarded as condition numbers for Problem \eqref{opt: primalcone}.
	They guarantee that the output of iterative algorithms to solve \eqref{opt: primalcone}
	is still useful despite optimization error (of the algorithm) and
	measurement error (of the problem data) \cite{lewis2014nonsmooth,ding2020regularity}.
	The error bound is also vital in proving faster convergence for first order algorithms \cite{drusvyatskiy2018error,zhou2017unified,necoara2019linear,johnstone2017faster}.
	Hence a huge body of work has devoted to establish error bounds and sensitivity of solutions
	~\cite{hoffman1952approximate,drusvyatskiy2018error,zhou2017unified,lewis2014nonsmooth,sturm2000error,nayakkankuppam1999conditioning}.
	
	\paragraph{Our Contribution.} In this paper,
	we use the notion of \emph{strict complementarity} (defined in Section \ref{sec: assumptions})
	to provide an elementary, geometric, and unified framework,
	described in detail in Section \ref{sec: The strict complementary slackness approach},
	to establish bounds of the form
	\eqref{eq: errorbound} and \eqref{eq: sensitivity}
	for the conic program \eqref{opt: primalcone}. 
	Specifically, in Section \ref{sec: conicexamples} and \ref{sec: fromerrorboundtosensitivityofsolution},
	we show how to construct a bound with exponents
	$p=p'=1$ for LP and $p=p'=2$ for SOCP and SDP, under strict complementarity,
	and provide a way to obtain explicit estimates of $c_i,i=1,2,3$ in terms of
	the primal and dual solutions and problem data when the primal solution is unique.
	Table \ref{tb: PAndconstantAndBoundf} summarizes our results.
	
	The main contribution of this paper is a new and simple framework for proving bounds of this form.
	As discussed in Section \ref{sec: discussion},
	many particular bounds that we present here have been discovered before.
	On the other hand, we believe that some of the bounds are new:
	in particular, bounds on the sensitivity of the solution
	that pertain when the primal or dual solution are not unique.
	
	\paragraph{Paper organization.} The rest of the paper is organized as follows.
	In Section \ref{sec: assumptions},
	we discuss two important analytical conditions assumed throughout this paper:
	strong duality and dual strict complementarity.
	In Section \ref{sec: Defining The strict complementary slackness approach},
	we describe the basic framework of the strict complementarity approach:
	linear regularity of convex sets, facial reduction, and extension via orthogonal decomposition.
	In Section \ref{sec: conicexamples},
	we apply the framework to specific examples, LP, SOCP, and SDP, to establish error bounds.
	We next demonstrate how to use the error bound established
	to characterize the sensitivity of solutions
	in Section \ref{sec: fromerrorboundtosensitivityofsolution} by
	bounding the error measures of the new solution $\xsol'\in \xset'$ in terms of the
	perturbation $(c-c',\Amap - \Amap',b-b')$.
	Finally, we discuss previous results regarding \eqref{eq: errorbound} and \eqref{eq: sensitivity},
	how this work relates to them, and potential new directions.
	
	\paragraph{Notation.} We use $\EE,\FF,\EE',\FF'$ to represent generic finite dimensional Euclidean spaces.
	For a set $\mathcal{C}$ in $\EE$, we denote its interior, boundary, affine hull, and relative interior
	as $\intr(\mathcal{C})$, $\partial \mathcal{C}$, $\affine{\mathcal{C}}$, and $\relint(\mathcal{C})$ respectively.
	We equip $\reals^\dm$ with the dot inner product,
	and $\sym^\dm$ and $\reals^{\dm\times \dm}$ with the trace inner product.
	The distance to a set $\mathcal{C}$ is defined as $\dist(x,\mathcal{C})=\inf_{z\in \mathcal{C}}\twonorm{x-z}$.
	We write $\norm{\cdot}$ for an arbitrary norm, and $\twonorm{\cdot}$ for the $\ell_2$ norm induced by the underlying inner product.
	For matrices, the operator norm (maximum singular value), Frobenius norm, and nuclear norm (sum of singular values) are
	denoted as $\opnorm{\cdot}$, $\fronorm{\cdot}$, and $\nucnorm{\cdot}$ respectively.
	For a linear map $\mathcal{B}:\mathbf{E}\rightarrow \mathbf{F}$ and a linear space $\Vspace\subset\mathbf{E}$,
	we write the restriction of
	$\mathcal{B}$ to $\Vspace$ as $\mathcal{B}_{\Vspace}$.
	We define the largest and smallest singular value of $\mathcal{B}$ as
	$\sigma_{\max } (\mathcal{B}):\,=\max_{\twonorm{x}=1}\twonorm{\mathcal{B}(x)}$ and $\sigma_{\min} (\mathcal{B}):\,=\min_{\twonorm{x}=1}\twonorm{\mathcal{B}(x)}$ respectively.
	
	\section{The strict complementary slackness approach}\label{sec: The strict complementary slackness approach}
	In Section \ref{sec: assumptions},
	we introduce two important structural conditions, strong duality
	and dual strict complementarity, that are essential to our approach.
	Next in Section \ref{sec: Defining The strict complementary slackness approach},
	we describe the main ingredients of the strict complementary slackness approach:
	linear regularity (Section \ref{sec: linearRegularity}),
	facial reduction (Section \ref{sec: FacialReduction}),
	and orthogonal decomposition (Section \ref{sec: extension}).
	Our main result, Theorem \ref{thm: basicframework}, is in Section \ref{sec: extension}.
	
	\subsection{Analytical conditions} \label{sec: assumptions}
	Here we define two conditions that are essential to our framework: strong duality and
	dual strict complementarity.
	To start, let us recall the dual problem of \eqref{opt: primalcone} is
	\beq \label{opt: d}\tag{$\mathcal{D}$}
	\ba{ll}
	\mbox  {maximize} & \inprod{\bvector}{y} \\
	\mbox{subject to}  & \cost- \Amap^* y \in \cone^*. \\
	\ea
	\eeq
	The vector $y\in \FF$ is the decision variable,
	the linear map $\Amap^*$ is the adjoint of the linear map $\Amap$,
	and the cone $\cone^*$ is the dual cone of $\cone$, \ie $\cone^*\defn \{s\in \EE\mid \inprod{s}{x}\geq 0,\,\forall x\in \cone\}$.
	Let us introduce strong duality first.
	\begin{definition}[Strong duality]
		The primal and dual problems \eqref{opt: primalcone} and \ref{opt: d} satisfy strong duality (SD)
		if the primal and dual solution sets $\xset, \yset$ are nonempty,
		$\xset$ is compact,
		and there exists a primal and dual solution pair $(\xsol,\ysol)\in \xset\times \yset$
		such that
		\begin{equation}\tag{SD}\label{eqn: strongduality}
			\begin{aligned}
				\inprod{\cost}{\xsol}=\inprod{\bvector}{\ysol}=\inprod{\Amap \xsol}{\ysol}.
			\end{aligned}
		\end{equation}
		Equivalently, define the slack vector $\ssol = \cost-\Amap^* \ysol$ to rewrite the equality
		\ref{eqn: strongduality} as
		\[
		0 =\inprod{c-\Amap^*(\ysol)}{\xsol}= \inprod{\ssol}{\xsol}.
		\]
	\end{definition}
	
	Note that we require the existence of primal and dual optimal solutions instead of just equality of optimal values. Strong duality in the stated form is ensured by \emph{primal and dual Slater's condition}: there is a primal and dual feasible pair $(x,y)$ with $(x,c-\Amap^*y)\in \intr (\cone)\times \intr(\cone^*)$.

	Next we state the second condition: \emph{dual strict complementarity}.
	This condition is the key to established error bounds for a variety of optimization problems \cite{drusvyatskiy2018error,zhou2017unified}.
	\begin{definition}[Dual strict complementarity (DSC)]
		Given a solution pair
		$(\xsol,\ysol)\in \xset \times \yset$,
		define the \emph{complementary face} $\Fface_{\ssol} :\,=\{x\mid \inprod{x}{\ssol}=0, \,\ssol = c-\Amap^*\ysol\}\cap \cone$. The
		solution pair $(\xsol,\ysol)$
		satisfies dual strict complementarity if
		\begin{equation}\tag{DSC}\label{eqn: strict complementarityset}
			\begin{aligned}
				\xsol \in \rel{\Fface_{\ssol}}.
			\end{aligned}
		\end{equation}
		If \eqref{opt: primalcone} and \eqref{opt: d} has one such pair, we say \eqref{opt: primalcone} and \eqref{opt: d} (or simply \eqref{opt: primalcone}) admits dual strict complementarity.
		
	\end{definition}
	Let us now unpack the definition of $\Fface_{\ssol}$ and dual strict complementarity. Also see Figure \ref{fig: 2ds3dstriccomplementarity} for
	a graphical illustration of the condition.
	
	\begin{figure}[H]
		\begin{subfigure}[(a)]{.5\textwidth}
			\centering
			\vspace{0.1 \textheight}
			\includegraphics[width=0.9\linewidth, height= 0.25\textheight]{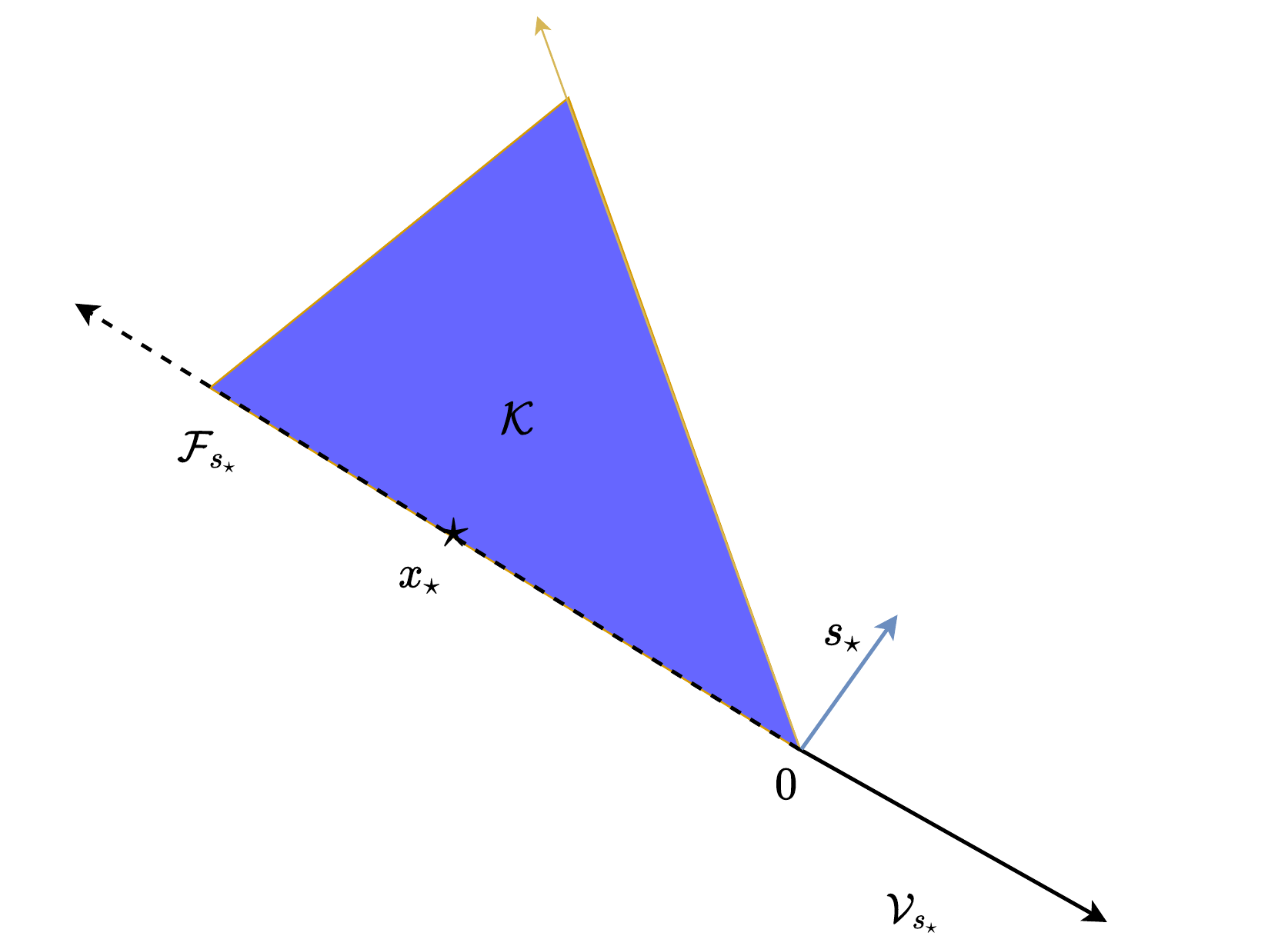}
			\caption{2D illustration}
			\label{fig:figure Matrix Completion}
		\end{subfigure}%
		\hspace{5pt}
		\begin{subfigure}[(b)]{.5\textwidth}
			\centering
			\includegraphics[width=0.7\linewidth, height= 0.35\textheight]{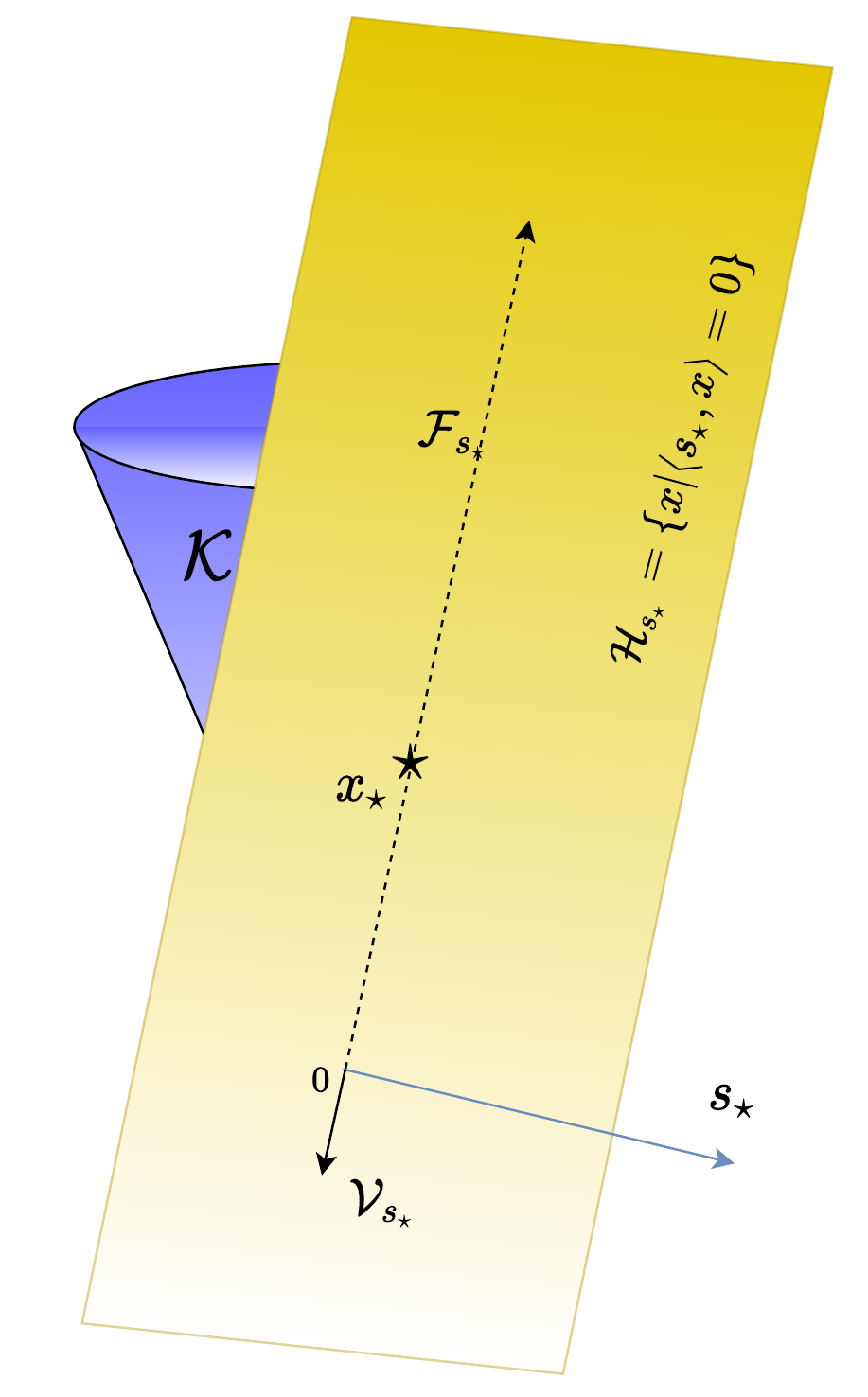}
			\caption{3D illustration}
			\label{fig:figure Max-cut}
		\end{subfigure}
		\caption{Strict Complementarity:
			For both plots, the indigo cone is the cone $\cone$;
			the slack vector $\ssol$ is the blue ray;
			the complementary face $\Fface_{\ssol}$ is the dashed black ray;
			and the complementary space $\Vspace_{\ssol}$ is the black line (both solid and dashed parts).
			In the 2D case, the complementary hyperplane $\hyperplane_{\ssol}$ and $\Vspace_{\ssol}$ coincide.
			In the 3D case, the complementary hyperplane $\hyperplane_{\ssol}$ is the yellow plane,
			which is tangent to the purple cone $\mathcal K$ at the point $\xsol$ and is
			orthogonal to $\ssol$.
			\label{fig: 2ds3dstriccomplementarity}
		}
	\end{figure}
	
	\paragraph{Understanding the complementary face $\Fface_{\ssol}$.}
	To understand the name complementary face,
	let us introduce the \emph{complementary hyperplane}, the hyperplane
	$\hyperplane_{\ssol} = \{x\mid \inprod{x}{\ssol}=0\}$
	orthogonal to the slack vector $\ssol=c-\Amap^*\ysol$.
	The complementary face $\Fface_{\ssol}$
	is simply the intersection of $\hyperplane_{\ssol}$ and the cone $\cone$.
	The intersection is nonempty due to strong duality.
	We can see that $\Fface_{\ssol}$ is indeed a face
	because its intersection with $\cone$ is nonempty (it contains $\xsol$),
	and every $x \in \cone$ lies on the same side of
	the hyperplane $\hyperplane_{\ssol}$, $\inprod{x}{\ssol}\geq 0$,
	as $\ssol\in \cone^*$.
	In particular, we see the face $\Fface_{\ssol}$ is exposed.
	
	\paragraph{Dual strict complementarity (DSC) and Slater's condition.}
	To better understand dual strict complementarity,
	define the \emph{complementary space} as the affine hull of $\Fface_{\ssol}$, $\Vspace_{\ssol}:\,=\affine{\Fface_{\ssol}}$.
	The complementary face $\Fface_{\ssol} =\hyperplane_{\ssol}\cap \cone$ is a cone,
	so the complementary space $\Vspace_{\ssol}$ is a linear subspace.
	Imagine modifying problem \eqref{opt: primalcone}
	by replacing the cone $\cone$ by $\Fface_{\ssol}$
	in problem \eqref{opt: primalcone}
	and restricting the decision variable $x$ to the subspace $\Vspace_{\ssol}$.
	Note that $\xsol$ is still a solution to this problem, so this
	procedure is related to facial reduction:
	the modified problem restricts $x$ to a face of the original cone $\cone$.
	DSC means that there is a primal $x$ in the interior
	of the cone $\Fface_{\ssol}\subset\Vspace_{\ssol}$,
	where the interior is taken w.r.t. the subspace $\Vspace_{\ssol}$.
	Hence DSC is equivalent to the usual Slater's condition for
	the modified problem. 
	
	\paragraph{Primal strict complementarity and strict complementarity.} Given dual strict complementarity (DSC),
	a natural way to define primal strict complementarity (PSC) is to
	reverse the role of $\ssol$ and $\xsol$ in the definition of DSC. Precisely,
	PSC means that there
	exists $(\xsol,\ysol)$ with $c-\Amap^*\ysol =\ssol \in \rel{\{s\mid \inprod{\xsol}{s} =0 \,\text{and}\, s\in \cone^* \}}$. Primal and
	dual strict complementarity are not always equivalent unless the cone $\cone$ is exposed.\footnote{
		For a discussion on primal and dual strict complementarity, see \cite[Remark 4.10]{dur2017genericity}}
	Happily, all the symmetric cones are exposed, including $\cone = \reals_+^\dm$, $\SOC{\dm}$, and $\sym_+^\dm$.
	\cite{dur2017genericity} shows that DSC and PSC actually hold ``generically"
	\footnote{Roughly speaking, this condition holds except on a measure $0$ set of problems
		parameterized by $\Amap,b,c$, conditioning on the existence of a primal dual solution pair.
		We refer the reader to the references for more details.} for general conic programs.
	It is worth noting that the standard notion of strict complementarity (SC) for SDP \cite[Definition 4]{alizadeh1997complementarity} and LP,
	both defined algebraically,
	are equivalent to the geometric notion of DSC here.\footnote{The definition of SC for LP and SDP, and the proof of the equivalence can be found in
		Section \ref{sec: Equivalence between DSC and SC for LP and SDP}.}
	SC always holds for LP \cite{goldman1956theory},
	holds ``generically" for SDP as shown in \cite{alizadeh1997complementarity},
	and even holds for some structured instances of SDP \cite{ding2020regularity}.
	Due to the equivalence,
	DSC also holds under the same conditions for LP and SDP.

	\subsection{Defining the strict complementary slackness approach}\label{sec: Defining The strict complementary slackness approach}
	In this section, we explain how to use the two assumptions in Section \ref{sec: assumptions}
	to establish a framework to prove error bounds of the form \eqref{eq: errorbound} and
	sensitivity bounds of the form \eqref{eq: sensitivity}.
	As we explained in the introduction, an error bound \eqref{eq: errorbound}
	can be used to derive a sensitivity bound \eqref{eq: sensitivity}.
	Hence, we focus on proving an error bound first.
	Our main theorem, Theorem \ref{thm: basicframework} in Section \ref{sec: extension},
	reduces the task to bounding the quantity
	$\twonorm{\proj{\Vspace_{\ssol}^\perp}(\conpart{x})}$.\footnote{Here $\Vspace_{\ssol}^\perp$ is orthogonal complementary space of $\Vspace_{\ssol}=\affine{\Fface_{\ssol}}$,
		and $\proj{\Vspace_{\ssol}^\perp}$ is the corresponding projection.
		Recall the conic part $\conpart{x}$ is $\conpart{x} =\proj{\cone}(x)$.}
	We explain how to further bound this quantity in Section \ref{sec: conicexamples}.

	\subsubsection{From optimality to feasibility: linear regularity of convex sets}\label{sec: linearRegularity}
	Our first step is to identify problem \eqref{opt: primalcone}
	with the feasibility problem of finding $x$ such that
	\begin{equation}\label{eq: feasibilityTerm}
		\inprod{\cost}{x}=\pval,\quad \Amap(x)=\bvector, \quad\text{and}\quad x \in \cone.
	\end{equation}
	This transformation activates the following geometric result,
	called linear regulairty regularity of convex sets {\cite[Theorem 4.6]{bauschke1999strong}}, {\cite[Theorem 2.1]{zhang2000global}},
	a classical result on error bounds for feasibility systems.
	This result states that the distance to the intersection of two sets
	is bounded by the sum of distances to the two sets.
	\begin{lemma}[Linear regularity] \label{lem: linearRegularity}
		Suppose the $C\subset \EE$ is an affine space with $C=\{x\mid \mathcal{B}x = d\}$, where $\mathcal{B}:\EE \rightarrow \FF$ is a linear map,
		and $D\subset \EE$ is a closed convex cone. If
		$C\cap \intr(D)\not=\emptyset$ and $C\cap D$ is compact,
		then there are some $\gamma,\gamma'>0$ such that for all $x\in \EE$,
		\begin{equation}
			\begin{aligned} \label{eq:CDintersectionbound}
				\dist(x, C\cap D)\leq \gamma\twonorm{\mathcal{B}x-d}+\gamma'\dist(x,D).
			\end{aligned}
		\end{equation}
	\end{lemma}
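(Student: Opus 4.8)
The plan is to prove the estimate in two stages, each of which uses only elementary convex geometry: first establish the inequality for points that already lie in the affine space $C$, exploiting the Slater point $\bar x\in C\cap\intr(D)$ together with a supporting‑hyperplane argument; then lift the bound to an arbitrary $x\in\EE$ by projecting onto $C$ and paying a Hoffman‑type price controlled by the pseudoinverse of $\mathcal B$. A pleasant by‑product is that this yields explicit constants: $\gamma'$ depends only on the ``thickness'' $\rho$ of the Slater ball and the diameter of $C\cap D$, and $\gamma$ additionally on $\opnorm{\mathcal B^\dagger}$.

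\emph{Stage 1 (points in $C$).} Fix $\bar x\in C\cap\intr(D)$ and choose $\rho>0$ with the ball $\{w:\twonorm{w-\bar x}\le\rho\}\subseteq D$; set $M:=\sup\{\twonorm{u-v}:u,v\in C\cap D\}$, which is finite since $C\cap D$ is compact. Let $x\in C$. If $x\in D$ there is nothing to prove, so assume $x\notin D$ and let $z$ be the last point of the segment $[\bar x,x]$ that lies in $D$, say $z=(1-\lambda)\bar x+\lambda x$ with $\lambda\in(0,1)$. Then $z\in\partial D$, and $z\in C$ as a convex combination of $\bar x,x\in C$, so $z\in C\cap D$ and $\dist(x,C\cap D)\le\twonorm{x-z}$. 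Take a unit vector $a$ with $D\subseteq\{w:\inprod{a}{w}\ge\inprod{a}{z}\}$ (supporting hyperplane at $z$); since $\bar x-\rho a\in D$, this forces $\inprod{a}{\bar x-z}\ge\rho$. Because $x-z=\tfrac{1-\lambda}{\lambda}(z-\bar x)$, we get $\inprod{a}{x-z}\le-\tfrac{1-\lambda}{\lambda}\rho<0$, hence $x$ lies outside the supporting half‑space and $\dist(x,D)\ge\inprod{a}{z}-\inprod{a}{x}=-\inprod{a}{x-z}\ge\tfrac{1-\lambda}{\lambda}\rho=\tfrac{\twonorm{x-z}}{\twonorm{z-\bar x}}\,\rho\ge\tfrac{\rho}{M}\twonorm{x-z}$, using $\twonorm{z-\bar x}\le M$. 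Therefore $\dist(x,C\cap D)\le\twonorm{x-z}\le\tfrac{M}{\rho}\dist(x,D)$ for every $x\in C$.

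\emph{Stage 2 (general $x$).} For arbitrary $x\in\EE$ let $x'=\proj{C}(x)$. Since $C=\{w:\mathcal Bw=d\}$ is a nonempty affine set, $d\in\operatorname{range}(\mathcal B)$ and $\twonorm{x-x'}=\dist(x,C)=\twonorm{\mathcal B^\dagger(\mathcal Bx-d)}\le\opnorm{\mathcal B^\dagger}\twonorm{\mathcal Bx-d}=:\gamma_0\twonorm{\mathcal Bx-d}$. Applying Stage 1 to $x'\in C$ and using the triangle inequalities $\dist(x,C\cap D)\le\twonorm{x-x'}+\dist(x',C\cap D)$ and $\dist(x',D)\le\twonorm{x-x'}+\dist(x,D)$ gives $\dist(x,C\cap D)\le(1+\tfrac{M}{\rho})\gamma_0\twonorm{\mathcal Bx-d}+\tfrac{M}{\rho}\dist(x,D)$, i.e.\ the claim with $\gamma=(1+M/\rho)\gamma_0$ and $\gamma'=M/\rho$.

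The routine ingredients — existence of the exit point $z$, existence of a supporting hyperplane at the boundary point $z$, and the pseudoinverse formula for $\dist(\cdot,C)$ — are standard, and Stage 2 is pure bookkeeping. The step I expect to be the crux is the lower bound $\dist(x,D)\ge\tfrac{\rho}{M}\twonorm{x-z}$ in Stage 1: it is the only place where all three hypotheses are genuinely used. The Slater point supplies a quantitative thickness $\rho$; the supporting hyperplane lets us replace the possibly complicated set $D$ by a single half‑space (so that a lower bound on the distance becomes computable); and compactness of $C\cap D$ keeps the lever arm $\twonorm{z-\bar x}$ uniformly bounded, which is what prevents $\gamma'$ from degrading as $x$ recedes to infinity. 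Dropping compactness breaks exactly this estimate, which is why ``bounded'' in bounded linear regularity is essential.
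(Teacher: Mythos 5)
Your proof is correct, but it is worth noting that the paper does not prove this lemma at all: it is invoked as a black box with citations to \cite{bauschke1999strong} and \cite{zhang2000global}, where it is obtained from the general machinery of (bounded) linear regularity of collections of convex sets. What you supply is the classical elementary alternative: a radial ``exit point'' argument from the Slater point $\bar x$, combined with a supporting hyperplane at the exit point $z$ and the Hoffman-type bound $\dist(x,C)\leq \opnorm{\mathcal B^\dagger}\twonorm{\mathcal Bx-d}$ for the affine part. I checked the computations: the identities $x-z=\tfrac{1-\lambda}{\lambda}(z-\bar x)$, the lower bound $\inprod{a}{\bar x-z}\geq\rho$ from $\bar x-\rho a\in D$, and the half-space distance estimate are all correct, and Stage 2 is routine. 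Your route buys something the citation does not, and which is in the spirit of the rest of the paper: explicit constants $\gamma'=M/\rho$ and $\gamma=(1+M/\rho)\opnorm{\mathcal B^\dagger}$ in terms of the diameter $M$ of $C\cap D$ and the Slater radius $\rho$; note also that your argument never uses that $D$ is a cone, only that it is closed and convex with $C\cap\intr(D)\neq\emptyset$, so it is slightly more general than the statement. Two small points to tidy up: (i) if $C\cap D$ is a singleton then $M=0$ and the quotient $\rho/M$ is undefined, but in that case $C\cap\intr(D)\neq\emptyset$ forces $C=\{\bar x\}$ and Stage 1 is vacuous, so any positive $\gamma'$ works; (ii) you should say a word on why the exit point $z$ satisfies $\lambda>0$ (because $\bar x\in\intr(D)$) and $z\in\partial D$ (maximality of $\lambda$ plus closedness of $D$) --- both are one-line observations but they are where the hypotheses enter.
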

	Now it is tempting to set $C=\{x\mid \inprod{\cost}{x}=\pval,\,\Amap(x)=\bvector\}$ and $D=\cone$, and conclude \eqref{eq: errorbound} holds with
	exponent $p=1$, since $C\cap D = \xset$.
	The catch is that for most problems of interest,
	the optimal solution $\xsol \in \partial \cone$ lies on the boundary of the cone $\cone$,
	and the condition $\intr(D)\cap C=\emptyset$ \emph{does not} hold!
	Indeed, unless $c=0$, which makes \eqref{opt: primalcone} a feasibility problem,
	we always have $\xsol \in \partial \cone$.
	
	\subsubsection{Facial reduction}\label{sec: FacialReduction}
	
	We may still use Lemma \ref{lem: linearRegularity} to establish an error bound.
	The key is to use the facial reduction idea mentioned earlier.
	Recall the condition required is
	$C\cap \intr(D)= \{x\mid \inprod{\cost}{x}=\pval,\, \Amap(x)=\bvector \}\cap\intr(\cone)\not=\emptyset$, which
	does not hold for \eqref{opt: primalcone} with nonzero $\cost$.
	The problem is that the cone $\cone$ lies in the large space $\EE$,
	so its interior (with respect to $\EE$) does not contain $\xsol$.
	Instead, consider restricting the variable $x$
	to the complementary space $\Vspace_{\ssol}=\affine{\Fface_{\ssol}}$
	and replacing the cone $\cone$ by the complementary face $\Fface_{\ssol}$.
	The interior of the cone $\Fface_{\ssol}$ with respect to the space $\Vspace_{\ssol}$
	does contain $\xsol$ under strict complementarity, so we may activate Lemma \ref{lem: linearRegularity}.
	
	This modification enables an error bound for $x\in \Vspace_{\ssol}$ as stated in
	Lemma \ref{lem: linearegularitystrictcomplementarity} below.
	We also provide a more concrete estimate of the constants
	$\gamma,\gamma'$ when $\xset$ is a singleton.
	
	\begin{lemma}\label{lem: linearegularitystrictcomplementarity}
		Suppose strong duality and dual strict complementarity hold. Then  there are constants $\gamma,\gamma'$ such that for any  $x\in \Vspace_{\ssol}$:
		\begin{equation}
			\begin{aligned}\label{eqn: erbstrictcomplslack}
				\dist(x,\xset) & \leq \gamma\twonorm{\Amap(x)-b} +\gamma' \dist(x,\Fface_{\ssol}).
			\end{aligned}
		\end{equation}
		Moreover, if $\xset$ is a singleton, then we may take $\gamma'=0$ and $\gamma=\frac{1}{\sigma_{\min}(\Amap_{\Vspace_{\ssol}})}$.
		Here, the linear map $\Amap_{\Vspace_{\ssol}}$ is
		$\Amap$ restricted to $\Vspace_{\ssol}$ and
		$\sigma_{\min}(\Amap_{\Vspace_{\ssol}})$ is the smallest singular value of  $\Amap_{\Vspace_{\ssol}}$.
	\end{lemma}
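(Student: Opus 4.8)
The plan is to apply the linear regularity lemma (Lemma~\ref{lem: linearRegularity}) inside the complementary space $\Vspace_{\ssol}$, treated as a Euclidean space in its own right. Take the affine set $C = \{x\in\Vspace_{\ssol}\mid \Amap_{\Vspace_{\ssol}}x = b\}$ and the closed convex cone $D = \Fface_{\ssol}\subseteq\Vspace_{\ssol}$. The first step is to show $C\cap D = \xset$. For $\supseteq$: any $\xsol'\in\xset$ satisfies $\inprod{c}{\xsol'} = \pval = \inprod{b}{\ysol} = \inprod{\Amap\xsol'}{\ysol} = \inprod{\xsol'}{\Amap^{*}\ysol}$ by strong duality and primal feasibility, hence $\inprod{\xsol'}{\ssol} = 0$; together with $\xsol'\in\cone$ this gives $\xsol'\in\Fface_{\ssol}\subseteq\Vspace_{\ssol}$, and $\Amap\xsol' = b$ puts $\xsol'$ in $C\cap D$. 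For $\subseteq$: if $x\in C\cap D$ then $x\in\cone$, $\Amap x = b$, and $\inprod{x}{\ssol}=0$, so $\inprod{c}{x} = \inprod{x}{\Amap^{*}\ysol} = \inprod{\ysol}{b} = \pval$; thus $x$ is primal feasible with optimal value, i.e.\ $x\in\xset$. Note this computation shows the cost constraint $\inprod{c}{x}=\pval$ is automatic on $C\cap D$, which is why it need not appear in the definition of $C$.

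Next I would check the two hypotheses of Lemma~\ref{lem: linearRegularity}. Compactness of $C\cap D = \xset$ is part of the strong duality assumption. For the interior condition, observe that $\Fface_{\ssol}$ is a closed convex cone whose affine hull is $\Vspace_{\ssol}$, so its relative interior is precisely its interior relative to $\Vspace_{\ssol}$; applying Lemma~\ref{lem: linearRegularity} with $\Vspace_{\ssol}$ as the ambient space, the required condition reads $C\cap\relint(\Fface_{\ssol})\neq\emptyset$. Dual strict complementarity gives $\xsol\in\relint(\Fface_{\ssol})$, and $\Amap\xsol = b$, so $\xsol$ lies in this intersection. Lemma~\ref{lem: linearRegularity} then produces constants $\gamma,\gamma'>0$ with $\dist(x,\xset)\leq\gamma\twonorm{\Amap_{\Vspace_{\ssol}}x - b} + \gamma'\dist(x,\Fface_{\ssol})$ for every $x\in\Vspace_{\ssol}$, and since $\Amap_{\Vspace_{\ssol}}x = \Amap x$ on $\Vspace_{\ssol}$ this is exactly \eqref{eqn: erbstrictcomplslack}.

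For the singleton case $\xset = \{\xsol\}$, the key claim is that $\Amap_{\Vspace_{\ssol}}$ is injective, so $\sigma_{\min}(\Amap_{\Vspace_{\ssol}}) > 0$. Suppose $\Amap d = 0$ for some $d\in\Vspace_{\ssol}$. Since $\xsol\in\relint(\Fface_{\ssol})$ and $d$ lies in the affine hull $\Vspace_{\ssol}$ of $\Fface_{\ssol}$, we have $\xsol + td\in\Fface_{\ssol}\subseteq\cone$ for all sufficiently small $|t|$; moreover $\Amap(\xsol+td) = b$ and, exactly as above, $\inprod{\xsol+td}{\ssol}=0$ forces $\inprod{c}{\xsol+td} = \pval$. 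Hence $\xsol+td\in\xset = \{\xsol\}$ for all small $|t|$, which forces $d = 0$. Consequently, for any $x\in\Vspace_{\ssol}$ we apply the definition of $\sigma_{\min}$ to the vector $x-\xsol\in\Vspace_{\ssol}$ to get $\dist(x,\xset) = \twonorm{x-\xsol}\leq\sigma_{\min}(\Amap_{\Vspace_{\ssol}})^{-1}\twonorm{\Amap_{\Vspace_{\ssol}}(x-\xsol)} = \sigma_{\min}(\Amap_{\Vspace_{\ssol}})^{-1}\twonorm{\Amap x - b}$, which is \eqref{eqn: erbstrictcomplslack} with $\gamma' = 0$ and $\gamma = \sigma_{\min}(\Amap_{\Vspace_{\ssol}})^{-1}$.

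The main obstacle is the bookkeeping that identifies $C\cap D$ with $\xset$ — in particular seeing that restricting to the face $\Fface_{\ssol}$ and the subspace $\Vspace_{\ssol}$ renders the cost constraint redundant — together with the companion injectivity argument in the singleton case; both rely on the fact that dual strict complementarity lets $\xsol$ be perturbed slightly within $\Fface_{\ssol}$ while remaining feasible and optimal.
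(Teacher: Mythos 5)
Your proposal is correct and follows essentially the same route as the paper: apply Lemma~\ref{lem: linearRegularity} inside the ambient space $\Vspace_{\ssol}$ with $C=\{x\in\Vspace_{\ssol}\mid \Amap x=b\}$ and $D=\Fface_{\ssol}$, note that the interior condition is exactly dual strict complementarity, and handle the singleton case by showing $\Amap_{\Vspace_{\ssol}}$ has trivial nullspace via the observation that a null direction would yield nearby optimal points. Your explicit verification that $C\cap D=\xset$ (i.e., that the cost constraint is automatic on the face) is a detail the paper states without proof, and it is carried out correctly.
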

	\begin{proof}
		The inequality \ref{eqn: erbstrictcomplslack} is immediate by using Lemma \ref{lem: linearRegularity} with
		$C=\{x\in \Vspace_{\ssol}\mid \Amap(x)=b\}\subset \Vspace_{\ssol}$, $D=\Fface_{\ssol}$, and $\EE = \Vspace_{\ssol}$.
		Indeed, this choice gives $C\cap D = \xset$.
		The condition $C\cap \intr(D) \not=\emptyset$, where the interior is taken with respect to the space $\Vspace_{\ssol}$, is exactly \eqref{eqn: strict complementarityset}: $\exists \xsol\in \xset$ such that $\xsol \in \relint(\Fface_{\ssol})$.
		
		Now we show that $\gamma'=0$ and $\gamma=\frac{1}{\sigma_{\min}(\Amap_{\Vspace_{\ssol}})}$ when $\xset$ is a singleton.
		First assume $\Amap_{\Vspace_{\ssol}}$ has a trivial nullspace,
		so $\xsol$ is the only solution in $\Vspace_{\ssol}$ to $\Amap_{\Vspace_{\ssol}}(x)=b$.
		Hence $\sigma_{\min}(\Amap_{\Vspace_{\ssol}})>0$ and so
		for any $x\in \Vspace_{\ssol}$,
		$\twonorm{x-\xsol}\leq \frac{1}{\sigma_{\min}(\Amap_{\Vspace_{\ssol}})}\twonorm{\Amap_{\Vspace_{\ssol}}(x-\xsol)} =  \frac{1}{\sigma_{\min}(\Amap_{\Vspace_{\ssol}})}\twonorm{\Amap(x)-b}$.
		Finally, we show by contradiction that the nullspace of $\Amap_{\Vspace_{\ssol}}$ is trivial
		whenever $\xset$ is a singleton.
		If the nullspace is not trivial,
		then there is some $x'\in \Vspace_{\ssol}$ such that $\Amap_{\Vspace_{\ssol}}(x')=0$.
		Hence $\xsol +\alpha x'$ for some small enough $\alpha$ is still optimal,
		as $\xsol \in \relint(\Fface_{\ssol})$,
		which contradicts our hypothesis that the solution set $\xset$ is a singleton.
		
	\end{proof}
	
	This choice of the face $\Fface_{\ssol}$ and
	the corresponding linear space $\Vspace_{\ssol}$
	correspond to the idea of facial reduction \cite{drusvyatskiy2017many, borwein1981facial}.
	Facial reduction is a conceptual and numerical technique designed to handle
	conic feasibility problems for which constraint qualifications
	(such as Slater's condition) fail.
	Note that such failure is the interesting case for a feasibility system \eqref{eq: feasibilityTerm} when the optimal solution $\xsol\in \partial \cone$.
	Indeed, our choice of face can be considered as one step of the facial reduction procedure.
	
	\subsubsection{Extension to the whole space: orthogonal decomposition}\label{sec: extension}
	In this section, we derive our main result, Theorem \ref{thm: basicframework},
	by extending the previous result to the whole space using the orthogonal decomposition
	$\EE = \Vspace_{\ssol}\oplus \Vspace_{\ssol}^\perp$
	with $\Vspace_{\ssol}\perp \Vspace_{\ssol}^\perp$.
	\begin{theorem}\label{thm: basicframework}
		Suppose strong duality and dual strict complementarity hold.
		Then for some
		constants $\gamma,\gamma'$ described in Lemma \ref{lem: linearegularitystrictcomplementarity} and for all $x\in \EE$,
		we have
		\begin{equation}
			\begin{aligned}\label{eqn: framework}
				\dist(x,\xset) \leq & (1+\gamma\sigma_{\max}(\Amap))\twonorm{\proj{\Vspace_{\ssol}^\perp}(\conpart{x})}+\gamma \twonorm{\Amap(x)-b}\\
				&+(1+\gamma\sigma_{\max}(\Amap))\twonorm{\proj{\Vspace^\perp_{\ssol}}(\conefeas{x})}+\gamma'\twonorm{\proj{\Vspace_{\ssol}}(\conefeas{x})}\\
				&+\gamma'\dist(\proj{\Vspace_{\ssol}}(\conpart{x}),\Fface_{\ssol}),
			\end{aligned}
		\end{equation}
		where $\proj{\Vspace_{\ssol}}$ and $\proj{\Vspace_{\ssol}^\perp}$ are orthogonal projections to $\Vspace_{\ssol}$ and $\Vspace_{\ssol}^\perp$ respectively.
		The terms $\twonorm{\proj{\Vspace_{\ssol}}(\conefeas{x})}$ and $\twonorm{\proj{\Vspace^\perp_{\ssol}}(\conefeas{x})}$ can themselves be bounded by $\twonorm{\conefeas{x}}$.
	\end{theorem}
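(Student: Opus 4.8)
The plan is to reduce everything to Lemma \ref{lem: linearegularitystrictcomplementarity} by splitting an arbitrary $x\in\EE$ along the orthogonal decomposition $\EE=\Vspace_{\ssol}\oplus\Vspace_{\ssol}^\perp$ and then cleaning up the resulting terms with the triangle inequality and the non-expansiveness of orthogonal projections. Throughout I would abbreviate $P=\proj{\Vspace_{\ssol}}$ and $Q=\proj{\Vspace_{\ssol}^\perp}$, so that $x=Px+Qx$ with $Px\perp Qx$.

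First I would bound $\dist(x,\xset)\le\twonorm{Qx}+\dist(Px,\xset)$ by the triangle inequality (this uses only $\twonorm{x-Px}=\twonorm{Qx}$; note also $\xset\subset\Vspace_{\ssol}$, as is implicit in the proof of Lemma \ref{lem: linearegularitystrictcomplementarity}, where $C\cap D=\xset$). Since $Px\in\Vspace_{\ssol}$, I can feed $z=Px$ into Lemma \ref{lem: linearegularitystrictcomplementarity} to get $\dist(Px,\xset)\le\gamma\twonorm{\Amap(Px)-b}+\gamma'\dist(Px,\Fface_{\ssol})$. Everything after this is bookkeeping to re-express $Px$, $Qx$, and $\Amap(Px)$ in terms of the conic part $\conpart{x}$, the conic infeasibility $\conefeas{x}$, and the linear residual $\Amap(x)-b$.

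Concretely I would use: (i) $\Amap(Px)=\Amap(x)-\Amap(Qx)$, hence $\twonorm{\Amap(Px)-b}\le\twonorm{\Amap(x)-b}+\sigma_{\max}(\Amap)\twonorm{Qx}$; (ii) $Px=P(\conpart{x})+P(\conefeas{x})$ together with $\dist(a+v,\Fface_{\ssol})\le\dist(a,\Fface_{\ssol})+\twonorm{v}$, hence $\dist(Px,\Fface_{\ssol})\le\dist(\proj{\Vspace_{\ssol}}(\conpart{x}),\Fface_{\ssol})+\twonorm{\proj{\Vspace_{\ssol}}(\conefeas{x})}$; and (iii) $Qx=Q(\conpart{x})+Q(\conefeas{x})$, hence $\twonorm{Qx}\le\twonorm{\proj{\Vspace_{\ssol}^\perp}(\conpart{x})}+\twonorm{\proj{\Vspace_{\ssol}^\perp}(\conefeas{x})}$. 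Substituting (iii) both where $\twonorm{Qx}$ appears on its own and where it appears multiplied by $\gamma\sigma_{\max}(\Amap)$ inside (i) produces the coefficient $1+\gamma\sigma_{\max}(\Amap)$ in front of $\twonorm{\proj{\Vspace_{\ssol}^\perp}(\conpart{x})}$ and of $\twonorm{\proj{\Vspace_{\ssol}^\perp}(\conefeas{x})}$, while (ii) contributes the $\gamma'$-terms; collecting everything yields exactly \eqref{eqn: framework}. The closing assertion of the theorem is then immediate, since orthogonal projections do not increase norms, so $\twonorm{\proj{\Vspace_{\ssol}}(\conefeas{x})}\le\twonorm{\conefeas{x}}$ and $\twonorm{\proj{\Vspace_{\ssol}^\perp}(\conefeas{x})}\le\twonorm{\conefeas{x}}$.

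I do not expect a genuine obstacle here: the content is entirely in the orthogonal decomposition, which is forced on us because Lemma \ref{lem: linearegularitystrictcomplementarity} is available only for points already lying in $\Vspace_{\ssol}$. The one point requiring attention is that $\proj{\Vspace_{\ssol}}(\conpart{x})$ need not belong to the face $\Fface_{\ssol}$, so it cannot be handed to the Lemma directly and must instead be charged to the residual term $\dist(\proj{\Vspace_{\ssol}}(\conpart{x}),\Fface_{\ssol})$; keeping track of which cross-term is attributed to which summand, without applying a triangle inequality that would spoil the clean coefficients, is the whole of the care needed.
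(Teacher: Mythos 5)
Your proposal is correct and follows essentially the same route as the paper: decompose $x$ along $\Vspace_{\ssol}\oplus\Vspace_{\ssol}^\perp$, apply the triangle inequality to get $\dist(x,\xset)\le\twonorm{\proj{\Vspace_{\ssol}^\perp}(x)}+\dist(\proj{\Vspace_{\ssol}}(x),\xset)$, invoke Lemma \ref{lem: linearegularitystrictcomplementarity} on $\proj{\Vspace_{\ssol}}(x)$, and then rewrite via $x=\conpart{x}+\conefeas{x}$. In fact the paper leaves that last bookkeeping step as ``a tad more algebra,'' and your items (i)--(iii) supply exactly the omitted details with the correct coefficients.
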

	\begin{proof}
		Recall Lemma \ref{lem: linearegularitystrictcomplementarity} establishes an error bound
		for only those $x\in \Vspace_{\ssol}$.
		Using the orthogonal decomposition proposed, for any $x\in \EE$,
		\[
		x = \proj{\Vspace_{\ssol}}(x) + \proj{\Vspace_{\ssol}^\perp}(x).
		\]
		This decomposition immediately gives
		\begin{equation}
			\begin{aligned}\label{eqn: orthogdecomp}
				\dist(x,\xset) & \leq \twonorm{\proj{\Vspace_{\ssol}^\perp}(x)} +
				\dist(\proj{\Vspace_{\ssol}}(x), \xset).
			\end{aligned}
		\end{equation}
		The second term $\dist(\proj{\Vspace_{\ssol}}(x), \xset)$ can be bounded using Lemma \ref{lem: linearegularitystrictcomplementarity}:
		\begin{equation}\label{eqn: orthogdecompandpreviouscombine}
			\dist(\proj{\Vspace_{\ssol}}(x), \xset)\leq \gamma\twonorm{\Amap(\proj{\Vspace_{\ssol}}(x))-b}+ \gamma'\dist(\proj{\Vspace_{\ssol}}(x),\Fface_{\ssol}).
		\end{equation}
		To translate the above bound to linear infeasibility $\Amap(x)-\bvector$ and conic infeasibility $\conefeas{x}$, we note that $x= \proj{\Vspace_{\ssol}}(x)+\proj{\Vspace_{\ssol}^\perp}(x)$ and $x=\conpart{x}+\conefeas{x}$\footnote{Recall
			$\conpart{x}=\proj{\cone}(x)$ and $x=\conpart{x}+\conefeas{x}$}. With these two decompositions,
		and tad more algebra,
		we arrive at the theorem.
	\end{proof}
	
	To go further, we need to bound the following two terms in terms of $\optgap(x)$,
	$\Amap(x)-b$, and $\conefeas{x}$:
	\begin{enumerate}
		\item The distance to the space $\Vspace_{\ssol}$: $\proj{\Vspace_{\ssol}^\perp}(\conpart{x})$.
		\item The term $\dist(\proj{\Vspace_{\ssol}}(\conpart{x}),\Fface_{\ssol})$.
	\end{enumerate}
	In Section \ref{sec: conicexamples},
	we show how to bound both terms for the special cases of LP, SOCP, SDP, and
	more general conic programs \eqref{opt: primalcone} where $\cone$ is a finite product of
	LP, SOCP, or SDP cones. A quick summary of results can be found in Table \ref{tb: PAndconstantAndBoundf}.
	
	As we shall see in Section \ref{sec: conicexamples},
	the term $\dist(\proj{\Vspace_{\ssol}}(\conpart{x}),\Fface_{\ssol})$ is usually zero.
	Thus the major challenge is bounding $\twonorm{\proj{\Vspace_{\ssol}^\perp}(\conpart{x})}$\footnote{Bounding this term is in some sense necessary in establishing an error bound. See more discussion in Section \ref{sec: lowerbound} in the appendix.}.
	Note that under this condition, for feasible $x$ of \eqref{opt: primalcone}, the bound  \eqref{eqn: framework} reduces to
	\begin{equation}
		\begin{aligned}\label{eqn: frameworkSimplerSituation}
			\dist(x,\xset) & \leq (1+\gamma\sigma_{\max}(\Amap))\twonorm{\proj{\Vspace_{\ssol}^\perp}(x)}.
		\end{aligned}
	\end{equation}
	If the solution set $\xset$ is a singleton, then from Lemma \ref{lem: linearegularitystrictcomplementarity}, we know
	$\gamma=\frac{1}{\sigma_{\min}(\Amap_{\Vspace_{\ssol}})}$, and we encounter a condition number like quantity
	$ \frac{ \sigma_{\max}(\Amap)}{\sigma_{\min}(\Amap_{\Vspace_{\ssol}})}$ in \eqref{eqn: frameworkSimplerSituation}. Depending
	on applications, the condition number may scale with the problem dimension but the bound is still tight as the following example shows.
	
	\begin{example}
		Consider an SDP with $C= -\ones\ones^\top$, where $\ones \in \reals^\dm$ is the all one vector, $\Amap = \diag(\cdot)$, and
		$b = \ones$. This is a simplification of the SDP for $\mathbb{Z}_2$ synchronization \cite{bandeira2018random,ding2020regularity}. For this
		SDP, it is easily verified that the unique optimal solution is $\ones \ones^\top$ and dual strict complementarity holds with dual optimal slack $\Ssol=-\ones\ones^\top +nI$. The condition number like quantity
		$\frac{ \sigma_{\max}(\Amap)}{\sigma_{\min}(\Amap_{\Vspace_{\ssol}})}$
		in this case is $\sqrt{\dm}$ which
		does scale with the dimension $\dm$. However, if in \eqref{eqn: frameworkSimplerSituation} we let $x = I_\dm$, the identity matrix which is feasible,  then
		the LHS and RHS of \eqref{eqn: frameworkSimplerSituation}
		are $\sqrt{n^2-n}$ and $\sqrt{n-1}+\sqrt{n^2-n}$ respectively. Thus the bound is actually tight for large $\dm$.
	\end{example}

	\section{Application: error bounds}\label{sec: conicexamples}
	In this section, we show how to use the framework established in Section \ref{sec: The strict complementary slackness approach}
	to analyze conic programs \eqref{opt: primalcone} over
	the nonnegative orthant, the second order cone, the set of positive semidefinite matrices, or a finite product
	of these cones. Our analysis has two main steps:
	\begin{enumerate}
		\item Identify and write out the complementary face $\Fface_{\ssol}$ and $\Vspace_{\ssol}$.
		\item Bound the term $\twonorm{\proj{\Vspace^\perp_{\ssol}}(x_+)}$ via a function $f(\inprod{\ssol}{\conpart{x}},\norm{x})$ ,
		called the violation of complementarity,
		using the explicit structure of $\Vspace_{\ssol}$.
	\end{enumerate}
	
	We summarize the findings of this section as the following lemma and corollary.
	Refer to Table \ref{tb: PAndconstantAndBoundf} for a quick summary of the results.
	\begin{lemma}\label{lem: boundPVperpx+}
		Define the complementarity error $\epsilon(x)=\inprod{\ssol}{x}$. Suppose strong duality holds.
		The quantity $\twonorm{\proj{\Vspace^\perp_{\ssol}}(x_+)}$ can be bounded by several
		different functions $f(\inprod{\ssol}{\conpart{x}},\norm{x})$, which we call the violation of complementarity,
		depending on the slack vector $\ssol$ and the cone $\cone$.
		The first two trivial cases are the following.
		\begin{enumerate}
			\item[1.] If $\ssol =0$, then $\norm{\proj{\Vspace^\perp_{\ssol}}(x_+)}=0=:f_{0}(\epsilon(\conpart{x}))$.
			\item[2.] If $\ssol \in \intr(\cone^*)$, then $\twonorm{\proj{\Vspace^\perp_{\ssol}}(x_+)}=\twonorm{x_+}\leq c_\star \epsilon(\conpart{x}) =:f_{\intr}(\epsilon(\conpart{x}))$,
			where $c_\star = \sup_{x\in \cone}\frac{1}{\inprod{\ssol}{\frac{x}{\twonorm{x}}}}<\infty$.
		\end{enumerate}
		Moreover, for the nontrivial case $\ssol\in \partial \cone^*/ \{0\}$, we have the following bounds:
		\begin{enumerate}
			\item[3.] $\cone = \reals_+^\dm$: $\twonorm{\proj{\Vspace^\perp_{\ssol}}(x_+)}\leq \frac{1}{s_{\min>0}}\epsilon(\conpart{x})=:f_{\tiny \reals_+^\dm}(\epsilon(\conpart{x})),$ where $s_{\min>0}$ is the smallest nonzero element of $\ssol$.
			\item[4.] $\cone = \SOC{\dm}$: $\twonorm{\proj{\Vspace_{\ssol}^\perp}(x_+)}
			\leq\sqrt{2\sqrt{2}\frac{\twonorm{x}\epsilon(\conpart{x})}{\twonorm{\ssol}}} =:f_{\tiny \SOC{\dm}}(\epsilon(\conpart{x}),\twonorm{x})$.
			\item[5.] $\cone = \sym_+^\dm$:	$\fronorm{\proj{\Vspace_{\Ssol}^\perp}(\conpart{X}) } \leq  \frac{\epsilon(\conpart{X})}{T} +
			\sqrt{2\frac{\epsilon(\conpart{X})}{T} \opnorm{X}}=:f_{\tiny \sym_+^{\dm}}(\epsilon(\conpart{X}),\opnorm{X})$. Here $T$ is the smallest nonzero eigenvalue of $\Ssol$.
		\end{enumerate}
	\end{lemma}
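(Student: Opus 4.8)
I would prove the five cases one at a time. In each case the recipe is the same: first write down the complementary face $\Fface_{\ssol}=\{x\in\cone:\inprod{x}{\ssol}=0\}$ explicitly from the structure of $\cone$ and $\ssol$; then read off its affine hull $\Vspace_{\ssol}$, the complement $\Vspace_{\ssol}^{\perp}$, and the projector $\proj{\Vspace_{\ssol}^{\perp}}$; and finally bound $\twonorm{\proj{\Vspace_{\ssol}^{\perp}}(\conpart{x})}$ by $\epsilon(\conpart{x})=\inprod{\ssol}{\conpart{x}}$ (and, for SOCP/SDP, $\twonorm{x}$ or $\opnorm{X}$) using the conic constraint $\conpart{x}\in\cone$ together with two elementary facts about $\conpart{x}=\proj{\cone}(x)$: since $\cone$ is a closed convex cone, Moreau's decomposition gives $\twonorm{\conpart{x}}\le\twonorm{x}$, and for $\cone=\sym_{+}^{\dm}$ the projection only deletes the negative eigenvalues of $X$, so also $\opnorm{\conpart{X}}\le\opnorm{X}$.

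\textbf{The easy cases 1--3.} If $\ssol=0$ then $\Fface_{\ssol}=\cone$, so $\Vspace_{\ssol}=\affine{\cone}\ni\conpart{x}$ and the projection vanishes. If $\ssol\in\intr(\cone^{*})$ then $\inprod{x}{\ssol}>0$ for every nonzero $x\in\cone$ (pointedness of $\cone$), hence $\Fface_{\ssol}=\{0\}$, $\Vspace_{\ssol}^{\perp}=\EE$, and $\proj{\Vspace_{\ssol}^{\perp}}(\conpart{x})=\conpart{x}$; the map $x\mapsto\inprod{\ssol}{x}$ is continuous and strictly positive on the compact set $\{x\in\cone:\twonorm{x}=1\}$, so it attains a positive minimum, which I call $1/c_{\star}$ (in particular $c_{\star}<\infty$), giving $\twonorm{\conpart{x}}\le c_{\star}\inprod{\ssol}{\conpart{x}}=c_{\star}\,\epsilon(\conpart{x})$. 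For $\cone=\reals_{+}^{\dm}$ and $\ssol\in\partial\cone^{*}\setminus\{0\}$, split $\{1,\dots,\dm\}$ into $I=\{i:\ssol_{i}=0\}$ and $J=\{i:\ssol_{i}>0\}$; nonnegativity of the coordinates of $x\in\reals_{+}^{\dm}$ makes $\inprod{x}{\ssol}=0$ force $x_{j}=0$ for $j\in J$, so $\Fface_{\ssol}$ is the coordinate face on $I$, $\Vspace_{\ssol}^{\perp}$ is the coordinate subspace on $J$, and $\twonorm{\proj{\Vspace_{\ssol}^{\perp}}(\conpart{x})}=\sqrt{\sum_{j\in J}(\conpart{x})_{j}^{2}}\le\sum_{j\in J}(\conpart{x})_{j}\le s_{\min>0}^{-1}\sum_{j\in J}\ssol_{j}(\conpart{x})_{j}=s_{\min>0}^{-1}\epsilon(\conpart{x})$, using $\|\cdot\|_{2}\le\|\cdot\|_{1}$ and $\ssol_{j}\ge s_{\min>0}$ on $J$.

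\textbf{Case 4 (SOCP).} Now the face is not a coordinate face, so more care is needed. Write $\ssol=(s_{0},\bar s)$ with $s_{0}=\twonorm{\bar s}>0$; a rotation of the $\reals^{\dm-1}$-block (which fixes $\SOC{\dm}$ and all norms in play) lets me assume $\bar s=s_{0}e_{1}$ for a unit coordinate vector $e_{1}$. For $x=(x_{0},x_{1},\dots,x_{\dm-1})\in\SOC{\dm}$ a short Cauchy--Schwarz computation gives $\inprod{x}{\ssol}=s_{0}(x_{0}+x_{1})\ge0$, with equality iff $x_{0}=-x_{1}$ and $x_{2}=\dots=x_{\dm-1}=0$; hence $\Fface_{\ssol}=\reals_{+}\cdot(1,-1,0,\dots,0)$, $\Vspace_{\ssol}=\mathrm{span}\{(1,-1,0,\dots,0)\}$, and $\Vspace_{\ssol}^{\perp}=\{y:y_{0}=y_{1}\}$. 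Writing $\conpart{x}=(p_{0},\dots,p_{\dm-1})$ and projecting yields $\twonorm{\proj{\Vspace_{\ssol}^{\perp}}(\conpart{x})}^{2}=\tfrac12(p_{0}+p_{1})^{2}+\sum_{i\ge2}p_{i}^{2}$; the cone constraint $p_{0}^{2}\ge\sum_{i\ge1}p_{i}^{2}$ bounds $\sum_{i\ge2}p_{i}^{2}\le(p_{0}-p_{1})(p_{0}+p_{1})$, and with $\alpha=p_{0}+p_{1}\ge0$, $\beta=p_{0}-p_{1}\ge0$ the right side becomes $\alpha(\tfrac{\alpha}{2}+\beta)\le\alpha(\alpha+\beta)=2p_{0}\alpha$. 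Since $\alpha=\epsilon(\conpart{x})/s_{0}=\sqrt2\,\epsilon(\conpart{x})/\twonorm{\ssol}$ and $p_{0}\le\twonorm{\conpart{x}}\le\twonorm{x}$, this gives $\twonorm{\proj{\Vspace_{\ssol}^{\perp}}(\conpart{x})}\le\sqrt{2\sqrt2\,\twonorm{x}\,\epsilon(\conpart{x})/\twonorm{\ssol}}$.

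\textbf{Case 5 (SDP), and the main obstacle.} Diagonalize $\Ssol=\sum_{i\le r}\lambda_{i}u_{i}u_{i}^{\top}$ with $\lambda_{1}\ge\dots\ge\lambda_{r}=:T>0$ and $r=\mathrm{rank}(\Ssol)\le\dm-1$, and use block notation relative to the orthonormal frame $[\,U\,|\,U_{\perp}\,]$ with $U=[u_{1},\dots,u_{r}]$. Because $\inprod{X}{\Ssol}=0$ together with $X\succeq0$ forces $X$ to vanish on $\mathrm{range}(\Ssol)$, one gets $\Fface_{\Ssol}=\{U_{\perp}ZU_{\perp}^{\top}:Z\succeq0\}$, $\Vspace_{\Ssol}=\{U_{\perp}WU_{\perp}^{\top}:W\in\sym^{\dm-r}\}$, and $\Vspace_{\Ssol}^{\perp}$ is the space of symmetric matrices whose trailing $(\dm-r)\times(\dm-r)$ block vanishes. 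So writing $\conpart{X}=\left(\begin{smallmatrix}A&B\\B^{\top}&C\end{smallmatrix}\right)\succeq0$ in this frame, $\proj{\Vspace_{\Ssol}^{\perp}}(\conpart{X})=\left(\begin{smallmatrix}A&B\\B^{\top}&0\end{smallmatrix}\right)$ and $\fronorm{\proj{\Vspace_{\Ssol}^{\perp}}(\conpart{X})}^{2}=\fronorm{A}^{2}+2\fronorm{B}^{2}$. From $\epsilon(\conpart{X})=\operatorname{tr}(\Lambda A)\ge T\operatorname{tr}(A)$ I get $\operatorname{tr}(A)\le\epsilon(\conpart{X})/T$, and $A\succeq0$ gives $\fronorm{A}\le\operatorname{tr}(A)\le\epsilon(\conpart{X})/T$. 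The one genuinely delicate step --- which I expect to be the main obstacle --- is bounding $\fronorm{B}$ without incurring a dimension-dependent $\operatorname{tr}(C)$: I would factor $\conpart{X}=ZZ^{\top}$, partition the rows of $Z$ as $\left(\begin{smallmatrix}P\\Q\end{smallmatrix}\right)$ so that $A=PP^{\top}$, $B=PQ^{\top}$, $C=QQ^{\top}$, and use $\fronorm{B}\le\fronorm{P}\opnorm{Q}$ with $\fronorm{P}^{2}=\operatorname{tr}(A)$ and $\opnorm{Q}^{2}=\opnorm{C}\le\opnorm{\conpart{X}}\le\opnorm{X}$, hence $\fronorm{B}^{2}\le\tfrac{\epsilon(\conpart{X})}{T}\opnorm{X}$. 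Combining via $\sqrt{a^{2}+b^{2}}\le a+b$ then yields $\fronorm{\proj{\Vspace_{\Ssol}^{\perp}}(\conpart{X})}\le\fronorm{A}+\sqrt2\fronorm{B}\le\tfrac{\epsilon(\conpart{X})}{T}+\sqrt{2\tfrac{\epsilon(\conpart{X})}{T}\opnorm{X}}$, which is the claimed $f_{\sym_{+}^{\dm}}$. Apart from this block argument, every step above is a one-line consequence of the explicit description of $\Vspace_{\ssol}$, so the heart of the proof is really the geometric identification of the complementary face in the SOCP and SDP cases.
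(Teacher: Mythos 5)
Your proof is correct, and for cases 1--3 it is essentially the paper's argument (the $\ell_2\le\ell_1$ step plus $s_j\ge s_{\min>0}$ for LP is verbatim what the paper does in \eqref{eqn: LPdistanceToinnerproduct}). The two places you diverge are worth noting. For the SOCP case you rotate $\ssol$ into the canonical form $(s_0,s_0,0,\dots,0)$ and work coordinatewise, whereas the paper expands $\twonorm{x_+}^2-\inprod{x_+}{\checkssol}^2$ directly against the unnormalized slack; the two computations are equivalent and land on the same constant $2\sqrt{2}$, and your version is arguably more transparent since the identity $\sum_{i\ge 2}p_i^2\le(p_0-p_1)(p_0+p_1)$ makes the role of the cone constraint explicit. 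For the SDP case the paper does not prove the bound at all --- it invokes an external result (Lemma \ref{lem: XSsmallXliveinS}, cited from \cite[Lemma 4.3]{ding2019optimal}) --- whereas you supply a self-contained proof via the factorization $\conpart{X}=ZZ^\top$ and the estimate $\fronorm{PQ^\top}\le\fronorm{P}\opnorm{Q}$ with $\fronorm{P}^2=\tr(A)\le\epsilon(\conpart{X})/T$ and $\opnorm{Q}^2=\opnorm{C}\le\opnorm{X}$; this is precisely the right mechanism for avoiding a dimension-dependent $\tr(C)$ and recovers the claimed $f_{\sym_+^\dm}$ exactly. The only cosmetic quibble is your attribution in case 2: the strict positivity of $\inprod{\ssol}{x}$ on $\cone\setminus\{0\}$ follows from $\ssol\in\intr(\cone^*)$ (and closedness of $\cone$), not from pointedness of $\cone$ per se; the compactness argument for $c_\star<\infty$ is otherwise fine and is more than the paper itself provides.
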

	\begin{proof}
		Let us first consider the two trivial cases (1) $\ssol =0$ and (2) $\ssol \in \intr(\cone^*)$.
		These cases are excluded whenever $\cost$ and $\bvector$ are both nonzero.
		In the first case, $\Vspace^\perp_{\ssol}=\{0\}$,
		and we simply have $\norm{\proj{\Vspace^\perp_{\ssol}}(x_+)}=0$.
		In the second case, we have $\Vspace^\perp_{\ssol}=\EE$, and
		$\norm{\proj{\Vspace^\perp_{\ssol}}(x_+)}=\norm{x_+}\leq c_\star \inprod{\ssol}{x_+}$,
		where $c_\star = \sup_{x\in \cone}\frac{1}{\inprod{\ssol}{\frac{x}{\norm{x}}}}<\infty$.
		We defer the proof for the other cases to Section \ref{sec: LP}, \ref{sec: SOCP},
		and Section \ref{sec: SDP} for LP, SOCP, and SDP respectively.
	\end{proof}
	
	Combining Lemma \ref{lem: boundPVperpx+}, and Theorem \ref{thm: basicframework}, we reach the following corollary.
	The quantity $\dist(\proj{\Vspace_{\ssol}}(\conpart{x}),\Fface_{\ssol})$ can be verified to be zero for
	the two trivial cases by noting (i) the complementary space $\Vspace_{\ssol}=\EE$ and $\Fface_{\ssol}=\cone$ for the case $\ssol=0$, and (ii) the complementary space $\Vspace_{\ssol}=\{0\}$ and $\Fface_{\ssol}$ is a closed cone for the case $\ssol \in \intr(\cone^*)$.
	It is also zero for other three cases as shown in Sections \ref{sec: LP}--\ref{sec: SDP}.
	\begin{corollary}\label{cor: errorboundLPSOCPSDPandOthers}
		Suppose strong duality and dual strict complementarity hold,
		and one of the five cases in Lemma \ref{lem: boundPVperpx+} pertains.
		Then there exists constants $\gamma,\gamma'$ so that for all $x\in \EE$,
		\begin{equation}
			\begin{aligned}\label{eqn: errorboundLPSOCPSDPandOthers}
				\dist(x,\xset) & \leq\kappa f(\epsilon(\conpart{x}),\norm{x})+\gamma \twonorm{\Amap(x)-b}\\
				&+\kappa \twonorm{\proj{\Vspace^\perp_{\ssol}}(\conefeas{x})}+\gamma'\twonorm{\proj{\Vspace_{\ssol}}(\conefeas{x})},\\
			\end{aligned}
		\end{equation}
		where the condition number $\kappa = 1+\gamma\sigma_{\max}(\Amap)$.
		In particular, when $\xset$ is a singleton, then $\gamma'=0$ and $\gamma = \frac{1}{\sigma_{\min}(\Amap_{\Vspace_{\ssol}})}$.
		Here the formula for $f(\epsilon(x),\norm{x})$ can be found in Lemma \ref{lem: boundPVperpx+} for each of the different cases, and we can further decompose the complementarity error
		$\epsilon(\conpart{x})=\optgap(x) +\inprod{\ysol}{b-\Amap(x)}- \inprod{\ssol}{\conefeas{x}}$ using $x=\conpart{x}+\conefeas{x}$.
	\end{corollary}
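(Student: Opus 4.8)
The plan is to glue together Theorem~\ref{thm: basicframework} and Lemma~\ref{lem: boundPVperpx+}, since the corollary is essentially a bookkeeping combination of the two. Starting from the inequality \eqref{eqn: framework} supplied by Theorem~\ref{thm: basicframework}, the leading term on its right-hand side is $(1+\gamma\sigma_{\max}(\Amap))\twonorm{\proj{\Vspace_{\ssol}^\perp}(\conpart{x})}$, and Lemma~\ref{lem: boundPVperpx+} gives precisely $\twonorm{\proj{\Vspace_{\ssol}^\perp}(\conpart{x})}\le f(\epsilon(\conpart{x}),\norm{x})$ in each of its five cases, with $f$ instantiated as $f_0$, $f_{\intr}$, $f_{\reals_+^\dm}$, $f_{\SOC{\dm}}$, or $f_{\sym_+^\dm}$ accordingly. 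Substituting this bound and abbreviating $\kappa=1+\gamma\sigma_{\max}(\Amap)$ rewrites the leading term as $\kappa f(\epsilon(\conpart{x}),\norm{x})$ and simultaneously puts the constant $\kappa$ in front of the two $\Vspace_{\ssol}^\perp$-projected conic-infeasibility terms.

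Next I would discard the last summand $\gamma'\dist(\proj{\Vspace_{\ssol}}(\conpart{x}),\Fface_{\ssol})$ in \eqref{eqn: framework}. For the two trivial cases this is immediate from the remark preceding the corollary: if $\ssol=0$ then $\Vspace_{\ssol}=\EE$ and $\Fface_{\ssol}=\cone$, so $\proj{\Vspace_{\ssol}}(\conpart{x})=\conpart{x}\in\cone=\Fface_{\ssol}$; if $\ssol\in\intr(\cone^*)$ then $\Vspace_{\ssol}=\{0\}$ and $0\in\Fface_{\ssol}$. For the three nontrivial cones the vanishing of this term is part of the explicit case-by-case analysis carried out in Sections~\ref{sec: LP}--\ref{sec: SDP}, which I would simply cite. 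Dropping this zero term leaves exactly \eqref{eqn: errorboundLPSOCPSDPandOthers}. The singleton assertion is then a direct appeal to Lemma~\ref{lem: linearegularitystrictcomplementarity}: when $\xset$ is a singleton we may take $\gamma'=0$ and $\gamma=1/\sigma_{\min}(\Amap_{\Vspace_{\ssol}})$, and $\gamma'=0$ also annihilates the remaining $\Vspace_{\ssol}$-projected conic-infeasibility term.

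Finally, for the stated decomposition of the complementarity error I would expand $\epsilon(\conpart{x})=\inprod{\ssol}{\conpart{x}}$ via $x=\conpart{x}+\conefeas{x}$, giving $\inprod{\ssol}{\conpart{x}}=\inprod{\ssol}{x}-\inprod{\ssol}{\conefeas{x}}$; then substitute $\ssol=\cost-\Amap^*\ysol$ so that $\inprod{\ssol}{x}=\inprod{\cost}{x}-\inprod{\ysol}{\Amap(x)}$; and use strong duality $\pval=\inprod{\bvector}{\ysol}$ together with $\inprod{\cost}{x}=\optgap(x)+\pval$ to obtain $\inprod{\ssol}{x}=\optgap(x)+\inprod{\ysol}{\bvector-\Amap(x)}$. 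Collecting the pieces yields $\epsilon(\conpart{x})=\optgap(x)+\inprod{\ysol}{\bvector-\Amap(x)}-\inprod{\ssol}{\conefeas{x}}$, as claimed.

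There is no genuine obstacle in this argument: all the mathematical content is already in Theorem~\ref{thm: basicframework}, Lemma~\ref{lem: boundPVperpx+}, and the (deferred) verifications that $\dist(\proj{\Vspace_{\ssol}}(\conpart{x}),\Fface_{\ssol})=0$. The only points requiring care are the bookkeeping of the constants $\kappa,\gamma,\gamma'$ across the substitution and keeping the sign conventions straight in the rewriting of $\epsilon(\conpart{x})$, in particular that $\conefeas{x}=x-\conpart{x}$ and $\ssol=\cost-\Amap^*\ysol$.
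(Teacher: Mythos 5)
Your proposal is correct and follows essentially the same route as the paper: the paper's own proof is exactly the combination of Theorem~\ref{thm: basicframework} with Lemma~\ref{lem: boundPVperpx+}, together with the observation that $\dist(\proj{\Vspace_{\ssol}}(\conpart{x}),\Fface_{\ssol})=0$ in the two trivial cases and in the LP/SOCP/SDP cases as verified in Sections~\ref{sec: LP}--\ref{sec: SDP}. Your additional explicit derivation of the decomposition $\epsilon(\conpart{x})=\optgap(x)+\inprod{\ysol}{\bvector-\Amap(x)}-\inprod{\ssol}{\conefeas{x}}$ via $\ssol=\cost-\Amap^*\ysol$ and strong duality is also the intended argument.
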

	A few remarks regarding the lemma and the corollary are in order.
	
	\begin{remark}[Global and local error bound]\label{rmk: localSOCPbound}
		Note the formula for the violation of complementarity  $f(\epsilon(x),\norm{x})$
		uses $\twonorm{x}$ for SOCP and $\opnorm{X}$ for SDP.
		Hence the bound \eqref{eqn: errorboundLPSOCPSDPandOthers} for these two cases
		does not quite align with the form of the error bound \eqref{eq: errorbound} we seek.
		To eliminate the dependence on this norm (by bounding the norm),
		either of the following conditions suffices:
		\begin{itemize}
			\item  $\norm{x}\leq B$ for some constant $B$,
			\item  $\max\{\optgap(x)\,,\norm{\Amap x-b}\,,\norm{\conefeas{x}}\}\leq \bar c$ for some constant $\bar c$.
		\end{itemize}
		The second requirement combined with \eqref{eqn: errorboundLPSOCPSDPandOthers}
		for SOCP (SDP) implies $\twonorm{x}$ ($\opnorm{X}$) is bounded by some $\bar B$ depending on $\bar c$ but independent of $x$ ($X$).
		We may then replace the term $\twonorm{x}$ ($\opnorm{X}$) by $B$ or $\bar B$ in $f$.
		Requiring either of these two conditions produces a \emph{local} error bound.
		Interestingly, no such condition on the norm is not necessary for the LP case and the other two trivial cases;
		hence the bounds \eqref{eqn: errorboundLPSOCPSDPandOthers} in these cases are \emph{global} error bounds.
	\end{remark}
	
	\begin{remark}[Value of $p$ and estimate of $c_i$ in \eqref{eq: errorbound}]
		Ignoring the term $f$, the bound \eqref{eqn: errorboundLPSOCPSDPandOthers} in Lemma \ref{lem: boundPVperpx+} is linear in
		$\optgap(x),\norm{\Amap x-b},\norm{\conefeas{x}}$.
		For LP and the two trivial cases,
		$f$ is linear in $\epsilon(\conpart{x})$,
		hence the error bound \eqref{eq: errorbound} holds with exponent $p=1$.
		For SOCP and SDP, the square root of $\epsilon(\conpart{x})$ appears in $f$,
		hence \eqref{eqn: errorboundLPSOCPSDPandOthers} gives an error bound
		of the form \eqref{eq: errorbound} with exponent $p=2$,
		under the assumption $\norm{x}\leq B$.
		
		Now let's consider the constants $c_1$, $c_2$, and $c_3$ in the error bound \eqref{eq: errorbound}. It is cumbersome to estimate these for general $x$;
		here, suppose $x$ is feasible.
		For the SOCP and SDP cases, also suppose $\norm{x}\leq B$.
		Then the bound \eqref{eqn: errorboundLPSOCPSDPandOthers} reduces to
		\begin{equation}
			\begin{aligned}\label{eqn: errorboundLPSOCPSDPandOthersFeasible}
				\dist(x,\xset) & \leq (1+\gamma\sigma_{\max}(\Amap))f(\optgap(x),B).
			\end{aligned}
		\end{equation}
		The resulting constant $c_1$ for $\optgap(x)$ in the error bound \eqref{eq: errorbound}
		for each of the five cases appears in Table \ref{tb: PAndconstantAndBoundf}.
	\end{remark}
	\begin{table}
		\centering
		\begin{tabular}{cccccc}
			\hline
			Conic                                      & $\ssol=$ & $\ssol \in  $ & \multirow{2}{*}{LP}  & \multirow{2}{*}{SOCP} &  \multirow{2}{*}{SDP} \\
			program                                         & $0$ & $  \intr(\cone^*)$ &   & & \\
			\hline
			violation of                    	& \multirow{3}{*}{0}
			&  \multirow{3}{*}{$c_\star \epsilon(\conpart{x})$} &  \multirow{3}{*}{$\frac{\epsilon(\conpart{x})}{s_{\min>0}}$}
			&   \multirow{3}{*}{$\sqrt{{\frac{2\sqrt{2}\twonorm{x}\epsilon(\conpart{x})}{\twonorm{\ssol}}}}$}
			&  \multirow{3}{*}{$\frac{\epsilon(\conpart{X})}{T} +
				\sqrt{\frac{2\epsilon(\conpart{X})\opnorm{X}}{T}}$}  \\
			complementarity               &  &  &   & & \\
			$f(\epsilon(\conpart{x}),\norm{x})$    & &  &   & & \\
			\hline
			exponents $p,p'$ in	            &  \multirow{2}{*}{1}  &  \multirow{2}{*}{1} &  \multirow{2}{*}{1}  &  \multirow{2}{*}{2} &  \multirow{2}{*}{2} \\
			\eqref{eq: errorbound} and \eqref{eq: sensitivity} &   & & & & \\
			\hline
			constant $c_1$ for $\optgap(x)$	                    & 0  & $\kappa c_\star $ & $\frac{\kappa}{s_{\min>0}}$ & $2\sqrt{2}\kappa^2\frac{B}{\twonorm{\ssol}}$ &
			$\kappa^2\frac{8B}{T}$ \\
			\hline
		\end{tabular}
		\caption{This table presents the bound $f(\epsilon(\conpart{x}),\norm{x})$  for $\twonorm{\proj{\Vspace_{\ssol}}(\conpart{x})}$,
			the power $p$ in \eqref{eq: errorbound} and \eqref{eq: sensitivity}, and an estimate of $c_1$ for \emph{feasible} $x$ for different cases based on $\ssol$ and $\cone$.
			For $\ssol \in \intr(\cone*)$, the quantity $c_\star = \sup_{x\in \cone}\frac{1}{\inprod{\ssol}{\frac{x}{\norm{x}}}}<\infty$.
			For LP, the quantity $s_{\min>0}$ is the smallest nonzero element of $\ssol$.
			For SDP, the quantity $T$ is the smallest nonzero eigenvalue of $\Ssol$.
			The condition number $\kappa = 1+\gamma\sigma_{\max}(\Amap)$ and is $1+\frac{\sigma_{\max}(\Amap)}{\sigma_{\min}(\Amap_{\Vspace_{\ssol}})}$ when
			$\xset$ is a singleton. We assume $\twonorm{x}\leq B$ for SOCP and $\opnorm{X}\leq B$ for SDP. We also assume $\frac{\epsilon(\conpart{X})}{T} \leq
			\sqrt{2\frac{\epsilon(\conpart{X})B}{T}}$ for SDP.
		} \label{tb: PAndconstantAndBoundf}
	\end{table}
	
	\begin{remark}[Conditions for LP]\label{rmk: LPcase}
		Recall that for linear programming,
		dual strict complementarity is the same as strict complementarity, which always holds under strong duality \cite{goldman1956theory}.
		Hence we need not explicitly require the dual strict complementarity condition.
		Also the compactness condition for strong duality in Section \ref{sec: assumptions}
		can be dropped if we establish Theorem \ref{thm: basicframework} using Hoffman's lemma \cite{hoffman1952approximate} instead of Lemma \ref{lem: linearRegularity}.
	\end{remark}

	\begin{remark}[Finite product of cones]\label{rmk: ProductOfCones}
		Error bounds for a conic program whose cone is a finite product
		of $\reals_+^\dm, \SOC{\dm}$, and $\sym_+^\dm$ can be
		established by bounding the term $\twonorm{\proj{\Vspace_{\ssol}}(\conpart{x})}$
		by a sum of te correponding $f$s in	Lemma \ref{lem: boundPVperpx+}.
		We omit the details.
	\end{remark}
	
	Next, we prove the bound $f$, violation of complementarity, in Lemma \ref{lem: boundPVperpx+}
	for the LP, SOCP, and SDP comes by following the aforementioned procedure:
	(i) identify and write out $\Fface_{\ssol}$ and $\Vspace_{\ssol}$, and (ii) bound the term $\twonorm{\proj{\Vspace^\perp_{\ssol}}(x_+)}$.
	
	\subsection{Linear programming (LP)}\label{sec: LP}
	In linear programming, the cone $\cone=\reals_+^\dm = \{x\in \reals^\dm \mid x_i\geq 0,\,\forall \,i =1,\dots, \dm\}$.
	
	\paragraph{Identify $\Fface_{\ssol}$ and $\Vspace_{\ssol}$.}For a particular dual optimal solution $(\ysol,\ssol)$, satisfying dual strict complementarity, the complementary face
	$\Fface_{\ssol}=\{x \in \reals_+^\dm \mid x_i = 0,\text{ for all } (\ssol)_i>0\}$, and the complementary space  $\Vspace_{\ssol}=\{x\in \reals^\dm \mid x_i = 0 \text{ for all } (\ssol)_i>0\}$.
	Hence, the term $\dist(\proj{\Vspace_{\ssol}}(x_+), \Fface_{\ssol})$ is simply zero as $\proj{\Vspace_{\ssol}}(x_+)\in \Fface_{\ssol}$.
	
	\paragraph{Bound the term $\twonorm{\proj{\Vspace^\perp_{\ssol}}(x_+)}$.} For the term $\twonorm{\proj{\Vspace^\perp_{\ssol}}(x_+)}$, denote $I_{\ssol} =\{i\mid (\ssol)_i>0\}$, $I^c_{\ssol}=\{1,\dots,n\}-I_{\ssol}$, and $s_{\min>0} = \min_{i\in I_s} s_i$, we have
	\begin{equation}\label{eqn: LPdistanceToinnerproduct}
		\begin{aligned}
			\twonorm{\proj{\Vspace^\perp_{\ssol}}(x_+)}=\twonorm{(x_+)_{I_{\ssol}}} & \leq  \onenorm{(x_+)_{I_{\ssol}}}
			&\leq \frac{1}{s_{\min>0}}\inprod{\ssol}{x_+}.\\
		\end{aligned}
	\end{equation}
	Hence, Lemma \ref{lem: boundPVperpx+} for the LP case is established.
	\subsection{Second order programming (SOCP)}\label{sec: SOCP}
	In second order cone programming, the cone $\cone$ is $\SOC{\dm} = \{x=(x_{1:n},x_{n+1})\mid \twonorm{x_{1:n}}\leq x_{n+1} ,\, x_{1:n}\in \reals^\dm, \, x_{n+1} \in \reals\}$.
	
	\paragraph{Identify $\Fface_{\ssol}$ and $\Vspace_{\ssol}$.} Given a dual solution $\ssol = (\ssoll{1:n},\ssoll{n+1})\in (\SOC{\dm})^* = \SOC{\dm}$ satisfying dual strict complementarity,
	the complementary face is defined as $\Fface_{\ssol} = \{x\mid \inprod{\ssoll{1:n}}{x_{1:n}}+x_{n+1}\ssoll{n+1}=0,; x\in \SOC{\dm}\}$.
	We can further simplify this expression as
	\[
	\Fface_{\ssol} = \{\lambda (-\ssoll{1:n}, \ssoll{n+1})\mid \lambda\geq 0\}.
	\]
	The complementary space $\Vspace_{\ssol}$ for the nontrivial case
	$\ssol \in \partial \SOC{\dm} \setminus (0,0)$
	is simply
	\[\Vspace_{\ssol}=\Span\{ (-\ssoll{1:n},\ssoll{n+1})\}.
	\]
	
	For the dual optimal solution $\ssol$, denote $\checkssol=\frac{1}{\norm{\ssol}}(-\ssoll{1:n},\ssoll{n+1})$. Note that  the term $\dist(\proj{\Vspace_{\ssol}}(x _+), \Fface_{\ssol} )$ is again simply zero as $\proj{\Vspace_{\ssol}}(x_+) \in \Fface_{\ssol}$.
	
	\paragraph{Bound the term $\twonorm{\proj{\Vspace^\perp_{\ssol}}(x_+)}$.}
	We now turn to analyze $\proj{\Vspace^\perp}(x_+)$, which can be written explicitly as
	\begin{equation}
		\begin{aligned}
			\proj{\Vspace^\perp}(x_+) & =
			x_+- \inprod{x_+}{\checkssol}\checkssol.
		\end{aligned}
	\end{equation}
	Now introduce the shorthand $\bar{\epsilon}(\conpart{x}) = \inprod{x_+}{\frac{\ssol}{\norm{\ssol}}}= \inprod{x_{+,1:\dm}}{\frac{\ssoll{1:n}}{\norm{\ssol}}}+x_{+,n+1}\frac{\ssoll{n+1}}{\norm{\ssol}}$. The norm square of $\proj{\Vspace^\perp}(\bfx_+)$ can be written as
	\begin{equation}\label{eqn: SOCPdistanceToinnerproduct}
		\begin{aligned}
			\twonorm{\proj{\Vspace^\perp}(x_+)}^2 & =  \twonorm{x_+}^2 - \inprod{x_+}{\checkssol}^2 \\
			& =  \twonorm{x_+}^2 +x_{+,\dm+1}^2 -\left(-\inprod{x_{+,1:n}}{\ssoll{1:n}} +x_{+,\dm+1}\frac{\ssoll{n+1}}{\norm{\ssol}}\right)^2 \\
			& \overset{(a)}{\leq} 2x_{+,\dm+1}^2 -(2x_{+,\dm+1}\frac{\ssoll{n+1}}{\norm{\ssol}}-\bar{\epsilon}(\conpart {x}))^2  
			\overset{(b)}{\leq}  -\bar{\epsilon}^2(\conpart{x}) +2\sqrt{2}x_{+,\dm+1}\bar{\epsilon}(\conpart{x}),
		\end{aligned}
	\end{equation}
	where in step $(a)$, we use the fact that $x_+\in \SOC{\dm}$ and the definition of $\epsilon$. In step $(b)$, we use the fact $\ssol\in \partial{\SOC{\dm}}$.
	Lemma \ref{lem: boundPVperpx+} for the SOCP case is established by noting $x_{+,\dm+1}\leq \norm{\conpart{x}}\leq \norm{x}$.
	
	\subsection{Semidefinite programming (SDP)}\label{sec: SDP}
	
	In semidefinite programming, the cone $\cone$ is $\sym_+^\dm =\{X\in \sym^\dm\mid X\succeq 0 \}$. Note that we use capital letter $X$ and $S$ for matrices. 
	
	\paragraph{Identify $\Fface_{\ssol}$ and $\Vspace_{\ssol}$.}
	For a dual optimal solution $(\ysol,S_\star)$ with
	$S_\star = C-\Amap \ysol \succeq 0$ satisfying dual strict complementarity,
	the complementary face $\Fface_{S_\star} :=
	\{X\mid \inprod{X}{S_\star}=0, X\succeq 0\} = \{X\mid X = VRV^\top, R\in \sym^r,R\succeq 0\}$, where $\rank(S_\star)=n-r$,
	and $V\in \reals ^{\dm \times r}$ is a matrix with orthonormal columns that span $\range(S_\star)$.
	In this case, the complementary space  $\Vspace_{S_\star} = \{VRV^\top \mid R \in \sym ^r\}$.
	We note that  the term $\dist(\proj{\Vspace_{\Ssol}}(X_+), \Fface_{S_\star})$
	is again zero as $\proj{\Vspace_{\Ssol}}(X_+) \in \Fface_{S_\star}$.
	
	\paragraph{Bound the term $\fronorm{\proj{\Vspace^\perp_{\ssol}}(X_+)}$.}
	We now turn to bound the term $\fronorm{\proj{\Vspace_{\Ssol}^\perp}(X_+)}$.
	We utilize Lemma \ref{lem: XSsmallXliveinS} \cite[Lemma 4.3]{ding2019optimal}
	to bound the term
	$\fronorm{\proj{\Vspace_{\Ssol}^\perp}(X_+)}$
	with $S = \Ssol$, and use the observation that $\opnorm{X_+}\leq \opnorm{X}$.
	\begin{lemma}\label{lem: XSsmallXliveinS}
		Suppose $X,S\in \sym^{\dm}$ are both positive semidefinite.
		Let $V\in \reals^{\dm\times r}$ be the matrix formed by the eigenvectors
		with the $r$ smallest eigenvalues of $S$ and define $\Vspace = \range(V)$.
		Let $\epsilon =\tr(XS)$.
		If $T = \lambda_{n-r}(S)>0$, then
		\[
		\fronorm{\proj{\Vspace^\perp}(X) } \leq  \frac{\epsilon}{T} +
		\sqrt{2\frac{\epsilon}{T} \opnorm{X}}
		\quad\text{and}\quad
		\nucnorm{\proj{\Vspace}(X)
		} \leq  \frac{\epsilon}{T} + 2\sqrt{r\frac{\epsilon}{T} \opnorm{X}}.
		\]
	\end{lemma}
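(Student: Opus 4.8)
The plan is to work in the eigenbasis of $S$ and split $X$ into blocks according to the split $\Vspace = \range(V)$ (the bottom $r$ eigenvectors, eigenvalues $\le \lambda_{n-r}(S)$ but actually the $r$ \emph{smallest}, so eigenvalues possibly including zeros) versus $\Vspace^\perp = \range(W)$ (the top $n-r$ eigenvectors, eigenvalues $\ge T>0$). Writing $X$ in this basis as a $2\times2$ block matrix with diagonal blocks $X_{VV}, X_{WW}$ and off-diagonal block $X_{VW}$, we have $\proj{\Vspace^\perp}(X) = WX_{WW}W^\top + WX_{VW}^\top V^\top + VX_{VW}W^\top$, so $\fronorm{\proj{\Vspace^\perp}(X)}^2 = \fronorm{X_{WW}}^2 + 2\fronorm{X_{VW}}^2$. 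The whole game is to bound $\fronorm{X_{WW}}$ and $\fronorm{X_{VW}}$ using $\epsilon = \tr(XS)$ and $\opnorm{X}$.

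First I would handle $X_{WW}$. Since $S$ restricted to $\Vspace^\perp$ is $\succeq T I$ and $X_{WW}\succeq 0$ (it is a principal block of a PSD matrix), we get $\epsilon = \tr(XS) \ge \tr(X_{WW} S_{WW}) \ge T\,\tr(X_{WW}) = T\nucnorm{X_{WW}}$. Hence $\nucnorm{X_{WW}}\le \epsilon/T$, and since $\fronorm{\cdot}\le\nucnorm{\cdot}$ we get $\fronorm{X_{WW}}\le \epsilon/T$. Next, for the cross term $X_{VW}$: because $\begin{psmallmatrix}X_{VV} & X_{VW}\\ X_{VW}^\top & X_{WW}\end{psmallmatrix}\succeq 0$, each column of $X_{VW}$ is controlled by the diagonal entries, giving the standard bound $\fronorm{X_{VW}}^2 \le \opnorm{X_{VV}}\,\tr(X_{WW}) \le \opnorm{X}\,\nucnorm{X_{WW}} \le \opnorm{X}\,\epsilon/T$ (one can prove the first inequality column-by-column via $|(X_{VW})_{ij}|^2 \le (X_{VV})_{ii}(X_{WW})_{jj}$, or via a Schur-complement/$2\times2$ PSD argument). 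Combining, $\fronorm{\proj{\Vspace^\perp}(X)}^2 \le (\epsilon/T)^2 + 2(\epsilon/T)\opnorm{X}$, and taking square roots together with $\sqrt{a+b}\le\sqrt a+\sqrt b$ yields the first claimed inequality.

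For the second inequality, $\proj{\Vspace}(X) = VX_{VV}V^\top$ so $\nucnorm{\proj{\Vspace}(X)} = \nucnorm{X_{VV}}$; I would not bound this directly (it need not be small — $X_{VV}$ can be large) — wait, it must be small since the whole point is $X$ nearly lives in $\Vspace$. Reconsidering: the statement bounds $\nucnorm{\proj{\Vspace}(X)}$, which is $\nucnorm{X_{VV}}$, and this is genuinely \emph{not} forced to be small by $\epsilon$ alone. So the intended reading must be the opposite: $\Vspace$ is spanned by the \emph{smallest} eigenvectors, and the lemma says $X$ nearly lives in $\Vspace$, i.e. the relevant small quantities are $\proj{\Vspace^\perp}(X)$ and the part of $X$ outside $\Vspace$. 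I would therefore re-read the lemma as bounding $\nucnorm{\proj{\Vspace}(X) - X}$ or equivalently the projections onto $\Vspace^\perp$; taking the statement at face value, $\nucnorm{\proj{\Vspace}(X)} $ should actually be $\nucnorm{X - \proj{\Vspace}(X)} = \nucnorm{\proj{\Vspace^\perp}(X)}$ in some conventions — in any case the bound $\epsilon/T + 2\sqrt{r\,(\epsilon/T)\opnorm{X}}$ follows from $\nucnorm{\cdot}\le\sqrt{\mathrm{rank}}\,\fronorm{\cdot}$: the off-diagonal contributes $VX_{VW}W^\top + WX_{VW}^\top V^\top$ of rank $\le 2r$ hmm, rather $\le r$ on each side, and $X_{WW}$-block via $\nucnorm{X_{WW}}\le\epsilon/T$ directly, giving $\nucnorm{\cdot}\le \nucnorm{X_{WW}} + 2\nucnorm{X_{VW}} \le \epsilon/T + 2\sqrt{r}\fronorm{X_{VW}} \le \epsilon/T + 2\sqrt{r(\epsilon/T)\opnorm{X}}$.

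The main obstacle I anticipate is purely bookkeeping: getting the indexing of ``smallest $r$ eigenvalues'' versus ``$T = \lambda_{n-r}(S)$'' consistent (so that $\Vspace^\perp$ really is the span of eigenvectors with eigenvalue $\ge T$), and correctly identifying which projection the nuclear-norm bound refers to; the analytic content — the two inequalities $\nucnorm{X_{WW}}\le\epsilon/T$ and $\fronorm{X_{VW}}^2\le\opnorm{X}\nucnorm{X_{WW}}$ — is short and standard once the blocks are set up, and since this is quoted verbatim from \cite[Lemma 4.3]{ding2019optimal} I would simply cite that proof rather than reproduce the algebra.
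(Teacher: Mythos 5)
The paper does not actually prove this lemma in-text; it is quoted verbatim from \cite[Lemma 4.3]{ding2019optimal}, so there is no internal proof to compare against. Your block-decomposition argument in the eigenbasis of $S$ is the standard (and surely the intended) proof, and it is correct: $\tr(X_{WW}S_{WW})\le \epsilon$ gives $\tr(X_{WW})\le \epsilon/T$, the cross-block bound $\fronorm{X_{VW}}^2\le \opnorm{X_{VV}}\tr(X_{WW})$ gives the square-root term, and $\sqrt{a+b}\le\sqrt a+\sqrt b$ finishes the Frobenius bound. Two remarks. First, you are right to be suspicious of the second inequality as printed: with $\proj{\Vspace}(X)=VX_{VV}V^\top$ it is false (take $X=VRV^\top$ with $R$ large and $SV=0$, so $\epsilon=0$ while $\nucnorm{\proj{\Vspace}(X)}$ is arbitrary); the quantity being bounded must be $\nucnorm{\proj{\Vspace^\perp}(X)}$, and your chain $\nucnorm{\proj{\Vspace^\perp}(X)}\le \nucnorm{X_{WW}}+2\nucnorm{X_{VW}}\le \epsilon/T+2\sqrt r\,\fronorm{X_{VW}}$ (using $\mathrm{rank}(X_{VW})\le r$) gives exactly the stated right-hand side. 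Second, a small slip in a parenthetical: summing the entrywise bound $|(X_{VW})_{ij}|^2\le (X_{VV})_{ii}(X_{WW})_{jj}$ yields only $\tr(X_{VV})\tr(X_{WW})$, not $\opnorm{X_{VV}}\tr(X_{WW})$; you need the column-by-column Schur-complement argument (or the factorization $X_{VW}=X_{VV}^{1/2}K X_{WW}^{1/2}$ with $\opnorm{K}\le 1$) that you also mention, which does give the operator-norm constant. With that route selected, the proof is complete.
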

	
	\section{Application: sensitivity of solution}\label{sec: fromerrorboundtosensitivityofsolution}
	As discussed in the introduction, to study the sensitivity of the solution,
	we consider a solution $\xsol'$ of the perturbed problem
	\begin{equation}\tag{$\mathcal{P'}$}
		\begin{aligned}\label{opt: primalconeperturbed}
			& \underset{x}{\text{minimize}}
			& & \inprod{\cost'}{x} \\
			& \text{subject to} & &  \Amap' x = \bvector' ,\\
			& & & x\in \cone.
		\end{aligned}
	\end{equation}
	where the problem data $(\Amap',\bvector',\cost')= (\Amap,\bvector,\cost) + (\DeltaA, \Deltab,\Deltac)$ for some small perturbation $\Delta = (\DeltaA, \Deltab,\Deltac)$,
	and ask how the distance $\twonorm{\xsol'-\xsol}$ changes according to $\Delta$.
	
	Note that once the error bound \eqref{eq: errorbound} is established,
	we can understand the sensitivity of the solution by estimating
	the suboptimality, linear and conic infeasibility of the new solution $\xsol'$
	with respect to the original problem \eqref{opt: primalcone}
	via the perturbation $\Delta$.
	Following this strategy, we prove the following theorem:
	\begin{theorem}\label{thm: sensitivityOfsolution}
		Suppose the primal and dual Slater's condition holds for some $(x_0,y_0)$, and the map $\Amap$ is surjective. For any small enough $\varepsilon>0$, there is some constant $\bar{c}$, such that for any optimal $\xsol'$ of \eqref{opt: primalconeperturbed}, and all $\norm{\Delta}:= \opnorm{\DeltaA} +\twonorm{\Deltab}+\twonorm{\Deltac}\leq \varepsilon$, we have
		\[
		\max\{\optgap(\xsol'),\twonorm{\Amap(\xsol')-\bvector}\}\leq \bar{c}\norm{\Delta}.
		\]
		Hence if \eqref{eq: errorbound} holds, then \eqref{eq: sensitivity} holds with $p=p'$.
	\end{theorem}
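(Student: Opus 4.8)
The plan is to bound the suboptimality and the linear infeasibility of a perturbed optimal solution $\xsol'$, measured against the \emph{original} data $(\Amap,\bvector,\cost)$, in terms of $\norm{\Delta}$; plugging these bounds into the error bound \eqref{eq: errorbound} (with $\conefeas{\xsol'}=0$ since $\xsol'\in\cone$) then yields \eqref{eq: sensitivity} with $p'=p$. First I would use the primal and dual Slater's condition for $(x_0,y_0)$, together with surjectivity of $\Amap$, to establish uniform stability of the perturbed problems: for $\norm{\Delta}\le\varepsilon$ small, $(x_0,y_0)$ remains (nearly) strictly feasible for the perturbed primal and dual, so \eqref{opt: primalconeperturbed} and its dual both satisfy strong duality, have nonempty compact primal solution set and bounded dual solution set, and the perturbed optimal values $\pval(\Delta)$ depend continuously on $\Delta$. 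In particular there is a uniform bound $R$ with $\twonorm{\xsol'}\le R$, $\twonorm{\ysol'}\le R$ for every perturbed solution pair with $\norm{\Delta}\le\varepsilon$; this is the step I expect to require the most care, since one must quote or rederive the standard Slater-based perturbation/boundedness estimates (e.g. via the normal-cone or metric-regularity machinery, or an explicit argument using that $x_0\in\intr(\cone)$, $\cost-\Amap^* y_0\in\intr(\cone^*)$).

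Next, the linear infeasibility is immediate: $\Amap(\xsol')-\bvector = (\Amap-\Amap')(\xsol') + (\bvector'-\bvector) = -\DeltaA(\xsol') + \Deltab$, so
\[
\twonorm{\Amap(\xsol')-\bvector}\le \opnorm{\DeltaA}\,\twonorm{\xsol'}+\twonorm{\Deltab}\le (R+1)\,\norm{\Delta}.
\]
For the suboptimality against the original objective, write $\optgap(\xsol')=\inprod{\cost}{\xsol'}-\pval$. I would bound this above and below. For the upper bound: let $\xsol\in\xset$ be an original optimal solution; then $\xsol$ is feasible for the perturbed problem up to an error $\Amap'\xsol-\bvector'=\DeltaA\xsol-\Deltab$ of size $O(\norm{\Delta})$, and using surjectivity of $\Amap$ (hence of $\Amap'$ for small $\Delta$) plus compactness one corrects $\xsol$ to a nearby perturbed-feasible point $\tilde x$ with $\twonorm{\tilde x-\xsol}=O(\norm{\Delta})$ and $\tilde x\in\cone$; comparing $\inprod{\cost'}{\xsol'}\le\inprod{\cost'}{\tilde x}$ and converting between $\cost$ and $\cost'$ gives $\inprod{\cost}{\xsol'}\le\pval+O(\norm{\Delta})$. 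For the lower bound, use weak duality for the \emph{original} problem: pick the Slater dual point $y_0$ (or better, a bounded dual optimal $\ysol'$ of the perturbed problem and correct it to an original-dual-feasible point), so that $\inprod{\cost}{\xsol'}\ge\inprod{\bvector}{y}-\abs{\inprod{\ssol'}{\xsol'}}$ with the complementarity-type residual controlled by $\norm{\Delta}$ and the uniform bounds $R$; this yields $\inprod{\cost}{\xsol'}\ge\pval-O(\norm{\Delta})$.

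Combining the two directions gives $\abs{\optgap(\xsol')}\le\bar c\,\norm{\Delta}$ for a constant $\bar c$ depending only on $R$, $\pval$, $\cost$, $\sigma_{\min}(\Amap)$, and the Slater margins, uniformly over $\norm{\Delta}\le\varepsilon$; together with the linear-infeasibility bound and $\conefeas{\xsol'}=0$ this is exactly $\max\{\optgap(\xsol'),\twonorm{\Amap(\xsol')-\bvector}\}\le\bar c\,\norm{\Delta}$ (absorbing constants). Finally, applying \eqref{eq: errorbound} to $x=\xsol'$ gives $\dist(\xsol',\xset)^p\le (c_1+c_2)\,\bar c\,\norm{\Delta}$, and since this holds for every $\xsol'\in\xset'$ we get $\textbf{dist}^{p}(\xset,\xset')\le \inf_{\xsol'\in\xset'}\dist(\xsol',\xset)^p = O(\norm{\Delta})$, i.e.\ \eqref{eq: sensitivity} with $p'=p$. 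The genuinely delicate point is the first paragraph: ensuring the constant $\bar c$ is \emph{uniform} in $\Delta$, which is where the compactness of $\xset$, surjectivity of $\Amap$, and the openness built into Slater's condition all get used.
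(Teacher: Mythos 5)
Your proposal is correct and follows essentially the same route as the paper: bound $\twonorm{\Amap\xsol'-\bvector}$ directly from $\Amap'\xsol'=\bvector'$ together with a uniform ($\Delta$-independent) bound on $\twonorm{\xsol'}$, bound $\optgap(\xsol')$ by correcting $\xsol$ to a perturbed-feasible point in $\cone$ (the paper does this explicitly as a convex combination of $\xsol+(\Amap')^\dagger(\Deltab-\DeltaA\xsol)$ with the perturbed Slater point $x_0'$) and invoking optimality of $\xsol'$, and then feed these estimates into \eqref{eq: errorbound}. The only substantive difference is that you additionally derive a lower bound on $\optgap(\xsol')$ via weak duality, which the paper's proof omits but which is in fact needed to control the absolute value $|\optgap(\xsol')|$ appearing in \eqref{eq: errorbound}.
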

	To facilitate the proof, we define the smallest nonzero singular value of $\Amap$ as
	$\sigma_{\min>0}(\Amap)= \min_{\twonorm{x}=1,\\ x\perp \nullspace(\Amap)}\twonorm{\Amap(x)}$,
	and the pseudoinverse of
	$\Amap'$ as $(\Amap')^\dagger (y) = \argmin_{\Amap'(x)=y}\twonorm{x}$.
	\begin{proof}
		Consider any solution $\xsol'$ to the problem \eqref{opt: primalconeperturbed}.
		Suppose the following assumptions are satisfied (proved in Appendix \ref{sec: apprndixfromerrorboundtosensitivityofsolution}):
		\begin{enumerate}
			\item \emph{Primal and dual Slater's condition for \eqref{opt: primalconeperturbed}}:
			there exist a primal $x_0'$ and dual $y_0'$ solution feasible for problem
			\eqref{opt: primalconeperturbed} and its dual
			that satisfy $\min\{\dist(x_0',\partial \cone),\dist(c-\Amap^*y_0',\partial(\cone^*))\}>\rho$ for some $\rho$ independent of $\Delta$,
			and $\max \{\twonorm{x_0-x_0'},\twonorm{y_0-y_0'}\}<\xi$ for some $\xi$ independent of $\Delta$.
			\item \emph{Boundedness of primal solutions of \eqref{opt: primalconeperturbed}}: There is some $B>0$ independent of $\Delta$ such that any solution $\xsol$ to \eqref{opt: primalconeperturbed}
			satisfies $\twonorm{\xsol}\leq B$.
		\end{enumerate}
		Let us start with the linear infeasibility $\Amap (\xsol')-b$. Using the linear feasibility of $\xsol'$, $\Amap'\xsol'=b'$, w.r.t. \eqref{opt: primalconeperturbed}, we have
		\begin{equation}
			\begin{aligned}\label{eqn: lfeasperturbed}
				\Amap'(\xsol') = b' & \implies (\Amap +\DeltaA)\xsol' = b+\Deltab \implies \Amap\xsol'-b =\Deltab -\DeltaA \xsol' \\
				&\implies \twonorm{\Amap \xsol'-b}\leq \twonorm{\Deltab}+\opnorm{\DeltaA}B.
			\end{aligned}
		\end{equation}
		This shows $\twonorm{\Amap \xsol'-b}\leq c\norm{\Delta}$ for any $c>B+1$.
		
		Next, consider $\optgap(\xsol')$. We would like to use the optimality of $\xsol'$ to \eqref{opt: primalconeperturbed} and compare it against $\xsol$.
		However, since
		$\xsol$ is not necessarily feasible for \eqref{opt: primalconeperturbed}, we need more subtle reasoning.
		Consider
		$\hat{x} \defn (1-\alpha)\left(\xsol + (\Amap')^\dagger(\Deltab-\DeltaA\xsol)\right)+\alpha x_0'$ with $\alpha = \frac{\twonorm{(\Amap')^\dagger(\Deltab-\DeltaA\xsol)}}{\rho +\twonorm{(\Amap')^\dagger(\Deltab-\DeltaA\xsol)}}
		$. Here $(\Amap')^{\dagger}$ exists and has largest singular value at most $\frac{2}{\sigma_{\min>0}(\Amap)}$ as long as $\sigma_{\max}({\DeltaA})\leq \frac{\sigma_{\min>0}(\Amap)}{2}$. We have
		$\twonorm{\hat{x}}\leq B_1$ for some $\Delta$ independent $B_1$ as $\twonorm{x_0-x_0'}<\xi$.  Note that  $\hat{x}\in \cone$ since
		\[
		\hat{x} = (1-\alpha) \underbrace{\xsol}_{\in \cone} + \alpha \underbrace{\left(x_0' + \frac{1-\alpha}{\alpha} (\Amap')^\dagger(\Deltab-\DeltaA\xsol)\right)}_{\in \cone \;\text{since}\;
			\twonorm{\frac{1-\alpha}{\alpha} (\Amap')^\dagger(\Deltab-\DeltaA\xsol)}\leq \rho },
		\]
		if $\alpha >0$. The case of $\alpha =0$ is trivial.
		We also know $\hat{x}$ is feasible with respect to the linear constraints of \eqref{opt: primalconeperturbed}:
		\begin{equation}
			\begin{aligned}\label{eqn: optgfeas}
				\Amap'(\hat{x})  & =(1-\alpha)\Amap'(\xsol + (\Amap')^\dagger(\Deltab-\DeltaA\xsol)) + \alpha \Amap'x_0' \\
				&= (1-\alpha)(b+\DeltaA(\xsol)+\Deltab -\DeltaA\xsol) +\alpha \bvector' = \bvector'.
			\end{aligned}
		\end{equation}
		Note by construction $\hat{x}- \xsol = \alpha (x_0' - \xsol -\alpha (\Amap')^\dagger(\Deltab-\DeltaA\xsol))=:\Delta x$
		with $\alpha = \frac{\twonorm{(\Amap')^\dagger(\Deltab-\DeltaA\xsol)}}{\rho +\twonorm{(\Amap')^\dagger(\Deltab-\DeltaA\xsol)}}
		$. Hence $\twonorm{\Delta x }\leq c'\left(\twonorm{\Deltab}+\opnorm{\DeltaA}\right)$ for some constant $c'$. Now using the optimality of $\xsol'$, we have
		\begin{equation}
			\begin{aligned}\label{eqn: optgap}
				\cost'\xsol' \leq \cost'\hat{x} \implies \optgap(\xsol') = \cost^\top \xsol' - \cost ^\top \xsol \leq \Deltac^\top (\hat{x}-\xsol')+\cost^\top \Delta x.
			\end{aligned}
		\end{equation}
		Hence $\optgap(\xsol') \leq \bar{c}\norm{\Delta}$ for large enough $\bar{c}$ using $\twonorm{\xsol'}\leq B$.
	\end{proof}
	
	\section{Discussion}\label{sec: discussion}
	Using the framework established in Section \ref{sec: The strict complementary slackness approach},
	we have shown an error bound of the form \eqref{eq: errorbound} and
	a bound on the sensitivity of the solution with respect to problem data \eqref{eq: sensitivity}
	for a broad class of problems: LP, SOCP, and SDP,
	and conic programs for which the cone is a finite product of the LP, SOCP, and SDP cones.
	Let us now compare the results we have obtained with the literature on error bounds and sensitivity of solution.
	
	\paragraph{Error bound}
	The celebrated results of \cite{hoffman1952approximate} show that the
	error bound is linear for linear programming: in \eqref{eq: errorbound},
	the exponent $p=1$.
	The work of Sturm \cite[Section 4]{sturm2000error} shows that under strict complementarity and compactness of the solution set,
	SDPs satisfy a quadratic error bound: $p=2$ in \eqref{eq: errorbound}.
	Sturm also discusses the exponent of $\dist(x,\xset)$ without strict complementarity:
	it can be bounded by $2^d$ where $d$ is the \emph{singularity degree},
	which is at most $n-1$~\cite[Lemma 3.6]{sturm2000error}. 
	A recent result shows that $p=2$  under dual strict complementarity type conditions when the cone $\cone$ is an \emph{amenable cone},
	which includes all symmetric cones~\cite{lourencco2019amenable}.
	When the cone $\cone$ is defined as a semialgebraic set (LP, SOCP, SDP are special cases),
	Drusvyatskiy, Ioffe, and  Lewis \cite[Corrollary 4.8]{drusvyatskiy2016generic} showed that for generic cost vector $c$,
	the exponent $p$ is always $2$ when the inequality is restricted to feasible $x$.
	We note that the proofs for these bounds do not provide estimates for $c_i$.
	In our framework, estimates of $c_i$ (expressed in terms of the primal and dual solution)
	can be obtained supposing the primal solution is unique.
	
	\paragraph{Sensitivity of solution}
	When $p'=1$, the bound describing the sensitivity of the solution \eqref{eq: sensitivity}
	is also called stability or metric regularity \cite[Definition A.6]{ruszczynski2011nonlinear},
	discussed in detail in \cite[Appendix A]{ruszczynski2011nonlinear},
	and \cite{klatte2006nonsmooth,bonnans2013perturbation}.
	In the context of semidefinite programming,
	when the primal and dual solutions are unique and strict complementarity holds,
	Nayakkankuppam and Overton \cite{nayakkankuppam1999conditioning} shows that \eqref{eq: sensitivity}
	holds with $p'=1$.
	When the cone $\cone$ is a semialgebraic set,
	Drusvyatskiy, Ioffe, and  Lewis \cite[section 5]{drusvyatskiy2016generic} showed that
	for generic perturbations in the cost vector and the right hand side vector $c-c',b-b'$, the sensitivity bound \eqref{eq: sensitivity} holds with $p'=1$.
	
	In Section \ref{sec: fromerrorboundtosensitivityofsolution}, we have seen
	that \eqref{eq: sensitivity} holds whenever \eqref{eq: errorbound} with $p'=p$.
	Hence, using our results from Section \ref{sec: conicexamples},
	we see $p'=1$ for LP, and $p'=2$ for SOCP and SDP.
	This result improves on the previous bound for SOCPs and SDPs assuming only strict complementarity.

	\paragraph{Other Cones?} An interesting future direction is the extension to other cones,
	\eg the copositive cone, the completely positive cone, and
	the doubly positive cone (the intersection of nonnegative matrices and positive semidefinite matrices).
	Can we still bound the term $\twonorm{\proj{\Vspace_{\ssol}^\perp }(x)}$?
	For the cones $\reals_+^\dm$, $\SOC{\dm}$, and $\sym_+^\dm$,
	our technique relies on the explicit structure of $\reals_+^\dm$, $\SOC{\dm}$, and $\sym_+^\dm$
	to bound $\twonorm{\proj{\Vspace_{\ssol}^\perp }(x)}$.
	Characterizing the facial structure seems to be challenging for other cones.
	
	\paragraph{Extension to quadratic programming (QP)} A potential future direction is to use the approach of the paper to establish error bounds for QP. The strategy consists of three steps: (1)  reducing the QP to an SOCP, (2) utilizing the error bound for the SOCP, and (3) translating the error bound to the QP setting. We leave the detail to future work.
	
	\section*{Acknowledgment} L. Ding and M. Udell were supported from NSF Awards IIS1943131 and CCF-1740822, the ONR Young Investigator
	Program, DARPA Award FA8750-17-2-0101, the Simons
	Institute, Canadian Institutes of Health Research, and Capital One.
	L. Ding would like to thank James Renegar for helpful discussions.
	
	\bibliographystyle{alpha}
	\bibliography{reference}
	
	\appendix
	\section{Proof for Section \ref{sec: fromerrorboundtosensitivityofsolution}}\label{sec: apprndixfromerrorboundtosensitivityofsolution}
	We first show the Slater's condition for \eqref{opt: primalconeperturbed} and its dual. Recall the Slater's condition
	for the original \eqref{opt: primalcone} means that the two points $x_0,y_0$ satisfying $(x_0,c-\Amap^*y)\in \intr(\cone) \times \intr(\cone^*)$.
	This implies that $\min\{\dist(x_0,\partial \cone),\dist(c-\mathcal{A}^*y_0,\partial \cone^*)\}\geq \eta$ for some constant $\eta>0$.
	We now construct $x_0'$ and $y_0'$ from $x_0$ and $y_0$.
	It can be easily verified if $\norm{\DeltaA} \leq \frac{\sigma_{\min>0}(\Amap)}{2}$,
	$ \frac{2}{\sigma_{\min>0}(\Amap)}(\norm{\Deltab}+\norm{\DeltaA}\norm{x_0})\leq \frac{\eta}{2}$,
	and $\norm{\Deltac}+\norm{\DeltaA}\norm{y_0}\leq \frac{\eta}{2}$, then the choice
	\begin{equation}
		\begin{aligned}
			x_0' = x_0 +\mathcal{A'}^\dagger (\Deltab -\DeltaA x_0), \quad y_0' = y_0
		\end{aligned}
	\end{equation}
	satisfy $\min\{\dist(x_0',\partial \cone),\dist(c'-(\mathcal{A}^*)'y_0', \partial (\cone^*))\}\geq \frac{\eta}{2}$, $\max\{
	\norm{x_0-x_0'},\norm{y_0-y_0'}\leq \frac{\eta}{2}$, and are feasible for\eqref{opt: primalconeperturbed} and its dual.
	
	Now for the boundedness condition of any solution $\xsol'$ to \eqref{opt: primalconeperturbed}. Using the previous constructed $x_0'$ and $y_0'$, we know
	\begin{equation}
		\begin{aligned}
			\inprod{\xsol'}{\cost'-(\Amap^*)'y_0'}& \leq \inprod{x_0'}{\cost'-(\Amap^*)'y_0'}
			=\inprod{c'}{x_0'} -\inprod{\bvector'}{y_0'}\\
			&=\inprod{\cost}{x_0}-\inprod{\bvector}{y_0} + \inprod{\Deltac}{x_0'}-\inprod{\Deltab}{y_0'}\\
			&\leq \inprod{\cost}{x_0}-\inprod{\bvector}{y_0} +\frac{\eta}{2}(\norm{x_0}+\frac{\eta}{2}+\frac{\sigma_{\min>0}(\Amap)}{2}\norm{y_0})
		\end{aligned}
	\end{equation}
	The rest is a simple consequence of the following lemma.
	\begin{lemma}
		Suppose $\cone$ is closed and convex. Given $\epsilon >0$ and $s_0\in (\cone^*)^\circ$ with
		$d = \dist(s_0,\partial \cone^*)>0$, there is a $B>0$ such that for any $x$ satisfying $x\in \cone$,
		and $\inprod{x}{s}\leq \epsilon$ for some $s$ with $\norm{s-s_0}\leq \frac{d}{2}$, its norm satisfies $\norm{x}\leq B$.
	\end{lemma}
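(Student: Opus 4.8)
The statement is a compactness/boundedness fact: if $s_0$ lies in the interior of $\cone^*$ with distance $d$ to the boundary, then the sublevel set $\{x\in\cone : \inprod{x}{s}\leq\epsilon\}$ is bounded uniformly over all $s$ in the ball $B(s_0,d/2)$. The key geometric input is that every $s$ with $\norm{s-s_0}\leq d/2$ still lies in the interior of $\cone^*$, in fact with $\dist(s,\partial\cone^*)\geq d/2$; this is immediate from the triangle inequality. So it suffices to prove the claim for a single fixed $s$ with $\dist(s,\partial\cone^*)\geq d/2$ and track that the resulting bound depends only on $d/2$ and $\epsilon$, not on $s$ itself.

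The plan is to exploit that an interior point of the dual cone is strictly positive against every nonzero element of $\cone$, quantitatively. Concretely, I would show: for any $s$ with $\dist(s,\partial\cone^*)\geq d/2$ and any $x\in\cone$ with $\norm{x}=1$, one has $\inprod{x}{s}\geq d/2$. To see this, note $s - \tfrac{d}{2}x$ has distance $\leq \norm{\tfrac{d}{2}x} = d/2$ from $s$; since $\dist(s,\partial\cone^*)\geq d/2$, the point $s-\tfrac{d}{2}x$ still lies in $\cone^*$ (here I use that a ball of radius $<\dist(s,\partial\cone^*)$ around $s$ stays inside $\cone^*$, using closedness/convexity of $\cone^*$). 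Then $0\leq\inprod{x}{s-\tfrac{d}{2}x}=\inprod{x}{s}-\tfrac{d}{2}$, giving $\inprod{x}{s}\geq d/2$. By homogeneity, for general nonzero $x\in\cone$, $\inprod{x}{s}\geq \tfrac{d}{2}\norm{x}$.

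With that inequality in hand the conclusion is immediate: if $x\in\cone$ satisfies $\inprod{x}{s}\leq\epsilon$ for some $s$ with $\norm{s-s_0}\leq d/2$, then $\tfrac{d}{2}\norm{x}\leq\inprod{x}{s}\leq\epsilon$, so $\norm{x}\leq B:=\tfrac{2\epsilon}{d}$, which is independent of the particular $s$. The case $x=0$ is trivial.

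The only genuinely delicate point is the step asserting that $s-\tfrac{d}{2}x\in\cone^*$ whenever $\norm{x}=1$ and $\dist(s,\partial\cone^*)=d'\geq d/2$ — i.e.\ that the open ball of radius $d'$ around an interior point stays inside the convex set. This is standard for closed convex sets: if some point $z$ with $\norm{z-s}<d'$ failed to lie in $\cone^*$, the segment from $s$ to $z$ would cross $\partial\cone^*$ at a point within distance $<d'$ of $s$, contradicting the definition of $d'=\dist(s,\partial\cone^*)$. I would include this one-line justification (or cite it) and otherwise the argument is a short chain of inequalities. The main obstacle is thus purely expository: making the "ball around an interior point" fact precise without over-formalizing, since everything else is elementary.
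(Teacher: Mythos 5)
Your proof is correct, and it takes a genuinely different route from the paper's. The paper argues by contradiction and compactness: assuming no such $B$ exists, it takes a sequence $(x_n,s_n)\in\cone\times\cone^*$ with $\norm{x_n}\to\infty$ and $\inprod{x_n}{s_n}\le\epsilon$, normalizes $x_n/\norm{x_n}$, passes to a convergent subsequence with limit $(x,s)$ satisfying $x\in\cone$, $\norm{x}=1$, $s\in\intr(\cone^*)$, and derives the contradiction $\inprod{x}{s}=0$. Your argument instead makes that final impossibility quantitative: you show directly that any $s$ with $\dist(s,\partial\cone^*)\ge d/2$ satisfies $\inprod{x}{s}\ge \tfrac{d}{2}\norm{x}$ for all $x\in\cone$, by testing the dual-cone membership of $s-\tfrac{d}{2}\tfrac{x}{\norm{x}}$ against $x$. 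This yields the explicit constant $B=2\epsilon/d$, whereas the paper's compactness argument is non-constructive; in a section devoted to tracking explicit constants in the sensitivity bound, your version is arguably the more natural one. The two delicate points are handled adequately: the reduction from $\norm{s-s_0}\le d/2$ to $\dist(s,\partial\cone^*)\ge d/2$ is the triangle inequality (the same fact the paper uses implicitly when asserting the limit $s$ lies in $\intr(\cone^*)$), and the claim that the closed ball of radius $\dist(s,\partial\cone^*)$ about $s$ stays in $\cone^*$ follows from your segment argument for the open ball together with closedness of $\cone^*$ for the bounding sphere, which is all you need since the subsequent inequality only requires $s-\tfrac{d}{2}\tfrac{x}{\norm{x}}\in\cone^*$, not membership in the interior.
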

	\begin{proof}
		Suppose such $B$ does not exist, then there is a sequence $(x_n,s_n)\in \cone \times \cone^*$ with
		$\norm{s_n-s_0}\leq \frac{d}{2}$, $\inprod{x_n}{s_n}\leq \epsilon$, and $\lim_{n\rightarrow \infty }\norm{x_n}=+\infty$.
		
		Now consider
		$(\frac{x_n}{\norm{x_n}},s_n)\in \cone \times \cone^*$. Since $\frac{x_n}{\norm{x_n}}$ and $s_n$ are bounded, we
		can choose a appropriate subsequence of $(\frac{x_n}{\norm{x_n}},s_n)$ which converges to certain $(x,s)\in \cone\times
		\intr(\cone^*)$
		as $\norm{s_n-s_0}\leq \frac{d}{2}$. Call the subsequence $(\frac{x_n}{\norm{x_n}},s_n)$  still. Using $\inprod{\frac{x_n}{\norm{x_n}}}{s_n}\leq \frac{\epsilon}{\norm{x_n}}$
		and $\norm{x_n}\rightarrow +\infty$, we see \[\inprod{x}{s}=0.\] This is not possible as $s\in \intr(\cone^*)$. Hence such $B$ must exist.
	\end{proof}
	\section{Equivalence between DSC and SC for LP and SDP}\label{sec: Equivalence between DSC and SC for LP and SDP}
We define the strict complementarity of LP and SDP, and 
show it is equivalent to DSC defined in Section \ref{sec: assumptions}. 
For a vector $x\in \reals^{\dm}$, denote $\nnzero(x)$ as its number of nonzeros. 
\begin{definition}\label{def: stricLPSDP}
	For LP, if there exists optimal primal dual pair $(\xsol,\ysol)\in \xset \times \yset \subset \reals_+^\dm \times \reals^\cons$ with 
	$\ssol = c-\Amap^* \ysol\in \reals_+^\dm$ such that 
	\[
	\nnzero(\xsol)+\nnzero(\ssol)=\dm,
	\] 
	we say \eqref{opt: primalcone} satisfies strict complementarity. Similary, for SDP, 
	if  there exists optimal primal dual pair $(\Xsol,\ysol)\in \xset \times \yset \subset \sym_+^\dm \times \reals^\cons$ with 
	$\Ssol = C-\Amap^* \ysol\in \sym_+^\dm$ such that 
	\[
	\rank(\Xsol)+\rank(\Ssol)=\dm,
	\] 
	we say \eqref{opt: primalcone} satisfies strict complementarity.
\end{definition} 

\begin{lemma}\label{lem: equivalenceSCandDSC}
	For both LP and SDP, under strong duality, the strict complementarity defined is equivalent to dual strict complementarity.
\end{lemma}
\begin{proof}
	For LP, under strong duality (see \eqref{eqn: strongduality}), we have for any optimal $\xsol$, 
	$\xsol\in \Fface_{\ssol}=\{x \in \reals_+^\dm \mid x_i = 0,\text{ for all } (\ssol)_i>0\}$. The relative 
	interior of $\Fface_{\ssol}$ is 
	\[
	\rel{\Fface_{\ssol}}= \{x \in \reals_+^\dm \mid x_i = 0\;\text{ for all } (\ssol)_i>0,\; \text{and}\; x_i >0 \;\text{ for all } (\ssol)_i=0\}.
	\]
	The equivalence between DSC and SC is then immediate.
	
	For SDP, under strong duality (see \eqref{eqn: strongduality}), we know that for any optimal $\Xsol$, 
	$\Xsol\in \Fface_{S_\star} = \{X\mid \inprod{X}{S_\star}=0, X\succeq 0\}=\{X\mid \range(X)\subset \nullspace(\Ssol),\; \text{and}\; X\succeq 0\}$. 
	The relative interior of $\Fface_{S_\star}$ is 
\[
\rel {\Fface_{S_\star}} = \{X\mid \range(X)=\nullspace(\Ssol),\; \text{and}\; X\succeq 0\}.
\]
The equivalence is immediate by using Rank-nullity theorem for $\Ssol$. 
\end{proof}

\section{A lower bound on distance to optimality: $\twonorm{\proj{\Vspace_{\ssol}^\perp}(x)} \leq \dist (x,\xset)$ }\label{sec: lowerbound}

We have shown how to establish upper bounds on $\dist (x,\xset)$
with an upper bound on $\twonorm{\proj{\Vspace{\ssol}^\perp}(\conpart{x})}$.
In this section, we show that the same quantity $\twonorm{\proj{\Vspace{\ssol}^\perp}(\conpart{x})}$
also yields a lower bound. Hence, it is important to understand the behavior of $\twonorm{\proj{\Vspace{\ssol}^\perp}(\conpart{x})}$.
For simplicity, we suppose in this section that $x$ is feasible for the problem \eqref{opt: primalcone}
: in this case, $\twonorm{\proj{\Vspace{\ssol}^\perp}(\conpart{x})} = \twonorm{\proj{\Vspace{\ssol}^\perp}(x)}$.
The argument for infeasible $x$ is essentially the same.

First recall for feasible $x$, the only nonzero error metric is the suboptimality $\optgap(x)=\inprod{c}{x}-\pval$.
Also note that $\optgap(x)=0 \iff \twonorm{\proj{\Vspace_{\ssol}^\perp}(x)}$ using complementary slackness.
Hence, there is some nonnegative function $g:\reals \rightarrow \reals_+$ such that $g(\optgap(x))\leq  \twonorm{\proj{\Vspace_{\ssol}^\perp}(x)}$.
Now note that for any feasible $x$, we always have the lower bound on distance
 given by $\twonorm{\proj{\Vspace_{\ssol}^\perp}(x)}$ as $\Vspace_{\ssol}\supset \xset$,
 \begin{equation}\label{eq: lowerboundInequality}
g(\optgap(x))\leq \twonorm{\proj{\Vspace_{\ssol}^\perp}(x)} = \dist(x,\Vspace_{\ssol})\leq \dist (x,\xset).
 \end{equation}
The lower bound \eqref{eq: lowerboundInequality} hence shows that
$\twonorm{\proj{\Vspace_{\ssol}^\perp}(\conpart{x})}$
provides a lower bound on the distance $\dist (x,\xset)$,
and provides hope that this bound might scale with the suboptimality $\optgap(x)$.
We summarize our findings in the following theorem.

\begin{theorem}\label{thm: equivalence}
	Suppose there exists a increasing
	continuous $g$ with $g(0)=0$ so that
	for any $x$ feasible for Problem \eqref{opt: primalcone}\footnote{
	The assumption $x$ being feasible is just for convenience of presentation. The equivalence still holds for all $x$ with suboptimality, infeasibility, and conic infeasibility bounded above by some constant $\bar c>0$.}
	with $\epsilon(x)\leq \bar c$,
\begin{equation}
\begin{aligned}\label{eqn: equivalenceFirstterm}
\twonorm{\proj{\Vspace_{\ssol}^\perp}(x)} \geq g(\optgap(x)).
\end{aligned}
\end{equation}
Then the following inequality holds:
		\begin{equation}
	\begin{aligned}\label{eqn: equivalencewhole}
	\dist(x,\xset) \geq g(\optgap(x)).
	\end{aligned}
	\end{equation}
\end{theorem}

\begin{remark}
	We also have a partial converse for the above theorem that follows from the same proof.
	Assume $\dist(\proj{\Vspace_{\ssol}}(x),\Fface_{\ssol})$ is $0$.
	If the relation
	\eqref{eqn: equivalencewhole}
	holds for all feasible $x$ with $\optgap(x)\leq \bar c$, then
	$\twonorm{\proj{\Vspace_{\ssol}^\perp}(x)} \geq \frac{g(\optgap(x))}{1+\gamma \sigma_{\max}(\Amap)} $.
	Here $\gamma$ is defined in \eqref{eqn: erbstrictcomplslack}.
\end{remark}
\begin{proof}
	We have proved that \eqref{eqn: equivalenceFirstterm} implies \eqref{eqn: equivalencewhole}
	using the motivating logic laid out at the beginning of this section.
Conversely, assume the term $\dist(\proj{\Vspace_{\ssol}}(\conpart{x}),\Fface_{\ssol})$ is zero, $x$ is feasible,
$\optgap(x)\leq \bar{c}$,
and the inequality \eqref{eqn: equivalencewhole} holds.
We see that for all feasible $x$,
\begin{equation}
\begin{aligned}
\beta g(\optgap(x)) \leq \dist(x,\xset)
\overset{(a)}{\leq} (1+\gamma\sigma_{\max}(\Amap))\twonorm{\proj{\Vspace_{\ssol}^\perp}(x)}.
\end{aligned}
\end{equation}
where we use \eqref{eqn: frameworkSimplerSituation} for step $(a)$.
\end{proof}

	\section{Conic Decomposition}\label{sec: Conic Decomposition}
In the main paper, we decompose a general $x\in \EE$ according to 
the subspace $ \Vspace_{\ssol}$. 
A different decomposition uses the cone $\Fface_{\ssol}$: 
every $x\in \EE$ admits the conic decomposition 
$x = \proj{\Fface_{\ssol}}(x) + \proj{\Fface_{\ssol}^{\circ}}(x)$ where $\Fface_{\ssol}^{\circ}$ 
is the \emph{polar cone} of $\Fface_{\ssol}$, i.e., the negative dual cone $-\Fface^{*}_{\ssol}$.

\begin{theorem}\label{thm: basicframeworkConic}
	Suppose strong duality and dual strict complementarity hold.
	Then for some
	constants $\gamma,\gamma'$ described in Lemma \ref{lem: linearegularitystrictcomplementarity} and for all $x\in \EE$,
	we have
	\begin{equation}
		\begin{aligned}\label{eqn: ConicDecompositionFramework}
			\dist(x,\xset) \leq & (1+\gamma\sigma_{\max}(\Amap))\twonorm{\proj{\Fface_{\ssol}^\circ}(x)}+\gamma \twonorm{\Amap(x)-b}.
		\end{aligned}
	\end{equation}
\end{theorem}
Let us compare the above bound \eqref{eqn: ConicDecompositionFramework} and \eqref{eqn: framework} in Theorem  \ref{thm: basicframework}. 
To make the comparison easier, first note that from the proof of Theorem \ref{thm: basicframework}, we can bound the distance from $x$ to $\xset$ 
using the decomposition $x = \proj{\Vspace_{\ssol}}(x) +\proj{\Vspace_{\ssol}^\perp}(x) $: 
\begin{equation}\label{eq: intermediateFramework}
	\begin{aligned} 
		\dist(x,\xset) \leq  (1+\gamma\sigma_{\max}(\Amap))\twonorm{\proj{\Vspace_{\ssol}^\perp}(x)}+\gamma \twonorm{\Amap(x)-b}
+	\gamma'\dist(\proj{\Vspace_{\ssol}}(x),\Fface_{\ssol}).
	\end{aligned} 
\end{equation}
The bound \eqref{eqn: framework} in Theorem  \ref{thm: basicframework} is further obtained via the decomposition  $x=\conpart{x}+\conefeas{x}$.

Comparing \eqref{eq: intermediateFramework} and \eqref{eqn: ConicDecompositionFramework}, we find that 
there is an extra term $\gamma'\dist(\proj{\Vspace_{\ssol}}(x),\Fface_{\ssol})$ in \eqref{eq: intermediateFramework} and 
the term $\twonorm{\proj{\Vspace_{\ssol}^\perp}(x)}$ in \eqref{eq: intermediateFramework} is replaced by $\twonorm{\proj{\Fface_{\ssol}^\circ}(x)}$.
Since $\twonorm{\proj{\Fface_{\ssol}^\circ}(x)} \geq \twonorm{\proj{\Vspace_{\ssol}^\perp}(x)}$, it is not immediately clear which bound 
is tighter. 

A more subtle difference between  \eqref{eqn: ConicDecompositionFramework} and \eqref{eqn: framework} in Theorem  \ref{thm: basicframework} 
is that we are not able to further bound $\twonorm{\proj{\Fface_{\ssol}^\circ}(x)}$ using the decomposition $x=\conpart{x}+\conefeas{x}$
with respect to $\mathcal K$.
We reach this impasse because the projection operator $\proj{\Fface_{\ssol}^\circ}$ 
is not linear and so we cannot rely on the triangle inequality $\twonorm{\proj{\Fface_{\ssol}^\circ}(x)} \leq \twonorm{\proj{\Fface_{\ssol}^\circ}(\conpart{x})} + \twonorm{\proj{\Fface_{\ssol}^\circ}(\conefeas{x})}$.
Hence we cannot bound $\twonorm{\proj{\Fface_{\ssol}^\circ}(x)}$  using conic infeasibility. 

Thus, to use \eqref{eq: intermediateFramework}, we must use $x$ (which may be infeasible with respect to the cone $\mathcal K$) directly
to bound $\twonorm{\proj{\Fface_{\ssol}^\circ}(x)}$. 
We now consider how to bound this term for the cases considered in the main text. 

\paragraph{Case $\ssol =0$.} For $\ssol =0$, we have $\Fface_{\ssol}= \cone$. Thus $\Fface_{\ssol}^\circ = \cone^\circ$ and $\proj{\Fface_{\ssol}^\circ}(x) = x_-$. 
For $\ssol \in \intr(\cone^*)$, we have $\Fface_{\ssol}=\{0\}$. Thus  $\Fface_{\ssol}^\circ = \EE$ and 
$\twonorm{\proj{\Fface_{\ssol}^\circ}(x)} =\twonorm{x}\leq \twonorm{x_+} + \twonorm{x_-}$. 
We can bound  $\twonorm{x_+} $ as in Lemma \ref{lem: boundPVperpx+}.

\paragraph{Case $\ssol =\reals_+$.} For $\cone = \reals_+$, the projection of $\proj{\Fface_{\ssol}^\circ}(x) = x - (x_{I_{\ssol}^c})_+= 
(x_+)_{I_{\ssol}} + x_-$ where 
$x_{I_{\ssol}^c}$ is the vector $x$ zeroing all entries in the support of $\ssol$ and $(x_+)_{I_{\ssol}} $ is 
the vector $x_+$ zeroing out all entries not in the support of $\ssol$. Hence, 
$\twonorm{\proj{\Fface_{\ssol}^\circ}(x) }\leq \twonorm{(x_+)_{I_{\ssol}}} + \twonorm{x_-}$ and we can 
further bound $\twonorm{(x_+)_{I_{\ssol}}}$ as in Lemma \ref{lem: boundPVperpx+}. 

\paragraph{Case $\ssol =\SOC{n}$.} 
For $\cone = \SOC{n}$, considering the nontrivial case $\ssol \not = 0$, 
the projection of $\proj{\Fface_{\ssol}^\circ}(x)$ is 
\[
 x -(\inprod{x}{\checkssol})_+ \checkssol = x_+ -(\inprod{x}{\checkssol})_+ \checkssol +x_-. 
\]
Thus we have 
\begin{equation}
	\begin{aligned} 
\twonorm{\proj{\Fface_{\ssol}^\circ}(x) - \proj{\Vspace^\perp_{\ssol}}(x_+)}
  & = \twonorm{\left( \inprod{x_+}{\checkssol} - (\inprod{x}{\checkssol})_+ \right)\checkssol +x_-} \\ 
  & \leq  |\inprod{x_+}{\checkssol}- (\inprod{x}{\checkssol})_+ | + \twonorm{x_-} \\
  & \overset{(a)}{\leq}  |\inprod{x_+}{\checkssol}| + \twonorm{x_-} .
\end{aligned} 
\end{equation}
In the step $(a)$, we use $x = x_+ + x_-$ and $\inprod{x_-}{\checkssol}\leq 0$. Hence, one 
can bound $\twonorm{ \proj{\Fface_{\ssol}^\circ}(x) }$ by combining the above bound and 
the bound on $\proj{\Vspace^\perp}(x_+)$ established in the main text. 

\paragraph{Case $\ssol = \sym_+^n$.} 
For $\cone = \sym_+^{n}$, $\proj{\Fface_{\Ssol}^\circ}(X)$ is 
\[
\proj{\Fface_{\Ssol}^\circ}(X) = X - V(V^\top XV)_+V^\top= X_+ - V(V^\top XV)_+V^\top +X_-.
\]
Since $\proj{\Vspace_{\Ssol}^\top }(X+) = X_+ -  VV^\top X_+VV^\top$, we know 
\begin{equation}
	\begin{aligned} 
		\fronorm{\proj{\Fface_{\ssol}^\circ}(X) - \proj{\Vspace^\perp}(X_+)}
		& = \fronorm{ V(V^\top XV)_+V^\top-  VV^\top X_+VV^\top +X_-}\\ 
		& \overset{(a)}{\leq}  \fronorm{(V^\top XV)_+-  V^\top X_+V} +\fronorm{X_-}\\
		& \overset{(b)}{\leq }2\fronorm{X_-}.
	\end{aligned} 
\end{equation}
In step $(a)$, we use the fact that $V$ has orthonormal columns, and 
in step $(b)$, we use the fact that $V^\top X_+V$ is still positive semidefinite 
and projection to the convex set $\sym_+^r$ is nonexpansive. 
Thus, we can bound
$\fronorm{\proj{\Fface_{\ssol}^\circ}(X)}$ using the result for 
$\fronorm{ \proj{\Vspace^\perp}(X_+)}$ in the main text. 

\paragraph{Case $x$ is feasible.} 
Finally, note that when $x$ is feasible for \eqref{opt: primalcone}, 
then in each of the five cases considered in the paper, 
the bound \eqref{eqn: ConicDecompositionFramework} and 
the bound \eqref{eqn: framework} in Theorem \ref{thm: basicframework} coincide. 
\begin{proof}
	Recall Lemma \ref{lem: linearegularitystrictcomplementarity} establishes an error bound
	only for $x\in \Vspace_{\ssol}$.
	Using the conic decomposition into the face $\Fface$ and its polar, 
	for any $x\in \EE$,
	\[
	x = \proj{\Fface_{\ssol}}(x) + \proj{\Fface_{\ssol}^{\circ}}(x).
	\]
	This decomposition immediately gives
	\begin{equation}
		\begin{aligned}\label{eqn: conicDecomp}
			\dist(x,\xset) & \leq \twonorm{\proj{\Fface_{\ssol}^\circ}(x)} +
			\dist(\proj{\Fface_{\ssol}}(x), \xset).
		\end{aligned}
	\end{equation}
	The second term $\dist(\proj{\Fface_{\ssol}}(x), \xset)$ can be bounded using Lemma \ref{lem: linearegularitystrictcomplementarity} as $\Fface_{\ssol} \subset \Vspace_{\ssol}$:
	\begin{equation}\label{eqn: ConicDecompandpreviouscombine}
		\dist(\proj{\Fface_{\ssol}}(x), \xset)\leq \gamma\twonorm{\Amap(\proj{\Fface_{\ssol}}(x))-b}.
	\end{equation}
Using the decomposition $x = \proj{\Fface_{\ssol}}(x) + \proj{\Fface_{\ssol}^{\circ}}(x)$ again for the term $\twonorm{\Amap(\proj{\Fface_{\ssol}}(x))-b}$,
we reach the bound \eqref{eqn: ConicDecompositionFramework}.
\end{proof}
	\section{Numerical simulation for the bound \eqref{eqn: errorboundLPSOCPSDPandOthers}}\label{sec: Conic Decomposition}

Here we numerically verify the 
correctness of the inequality \eqref{eqn: errorboundLPSOCPSDPandOthers} for feasible $x$:
\begin{equation}
	\begin{aligned}\label{eqn: errorboundLPSOCPSDPandOthersFeasibleNorm}
	\dist(x,\xset) & \leq (1+\gamma\sigma_{\max}(\Amap))f(\optgap(x),\norm{x}).
	\end{aligned}
\end{equation}
The function $f$ can be found in Table \ref{tb: PAndconstantAndBoundf}.

\paragraph{Experiment setup} We generated a random instance $\mathcal{A},b,c$ for each of LP, SOCP, and SDP. We solved the corresponding conic problem and obtain the optimal solution $\xsol$ and a dual optimal $\ssol$. We numerically verified  that the strict complementarity (by checking Definition \ref{def: stricLPSDP} for LP and SDP and \eqref{eqn: strict complementarityset} for SOCP)  and the uniqueness of the primal (by checking whether $\sigma_{\min}(\Amap_{\Vspace_{\star}})>0$) both hold for the three cases.  
We compute $\gamma = \frac{1}{\sigma_{\min}(\Amap_{\Vspace_{\star}})}$ according to Lemma 2. Next, we randomly perturbed the solution $\xsol$ $70$ many times and obtained possibly infeasible $x_i', i = 1,\dots, 70$. We then projected $x_i'$ to the feasible set to obtain $x_i$. Finally, we plotted the suboptimality of $x_i$ versus the distance to $\xsol$ (in blue), and the the suboptimality of $x_i$ versus the bound $(1+\gamma\sigma_{\max}(\Amap))f(\optgap(x_i),\norm{x_i})$ (in red) in Figure \ref{fig:three graphs opt distance bound}.

From Figure \ref{fig:three graphs opt distance bound}, we observe that the bounds are above the actual distance which assures \eqref{eqn: errorboundLPSOCPSDPandOthersFeasibleNorm}. However, for SDP, the bounds appear to be looser compared to the cases of LP and SOCP. 

\begin{figure}
     \centering
     \begin{subfigure}[b]{0.48\textwidth}
         \centering
         \includegraphics[width=\textwidth]{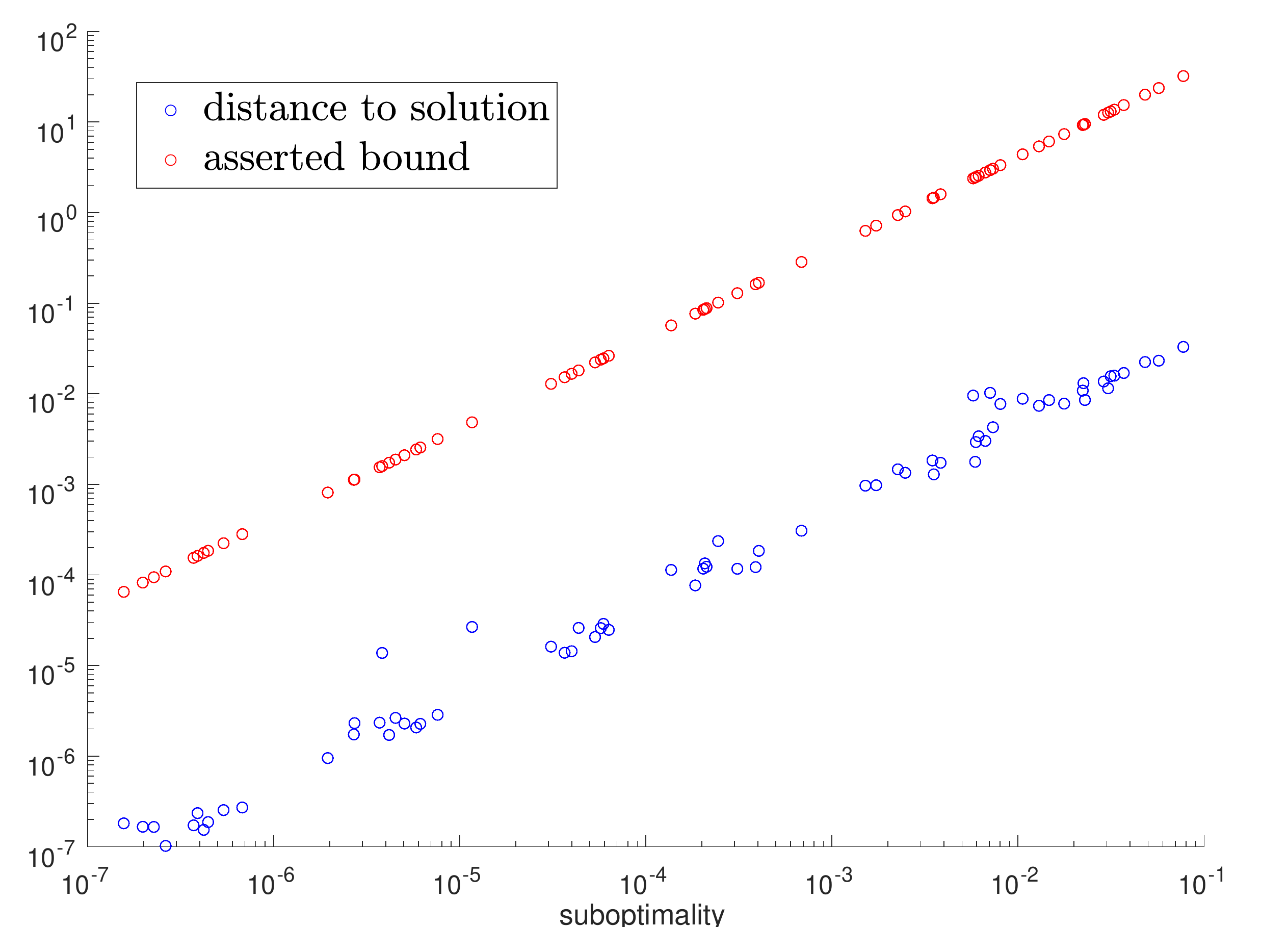}
         \caption{LP}
         \label{fig: LP}
     \end{subfigure}
     \hfill
     \begin{subfigure}[b]{0.48\textwidth}
         \centering
         \includegraphics[width=\textwidth]{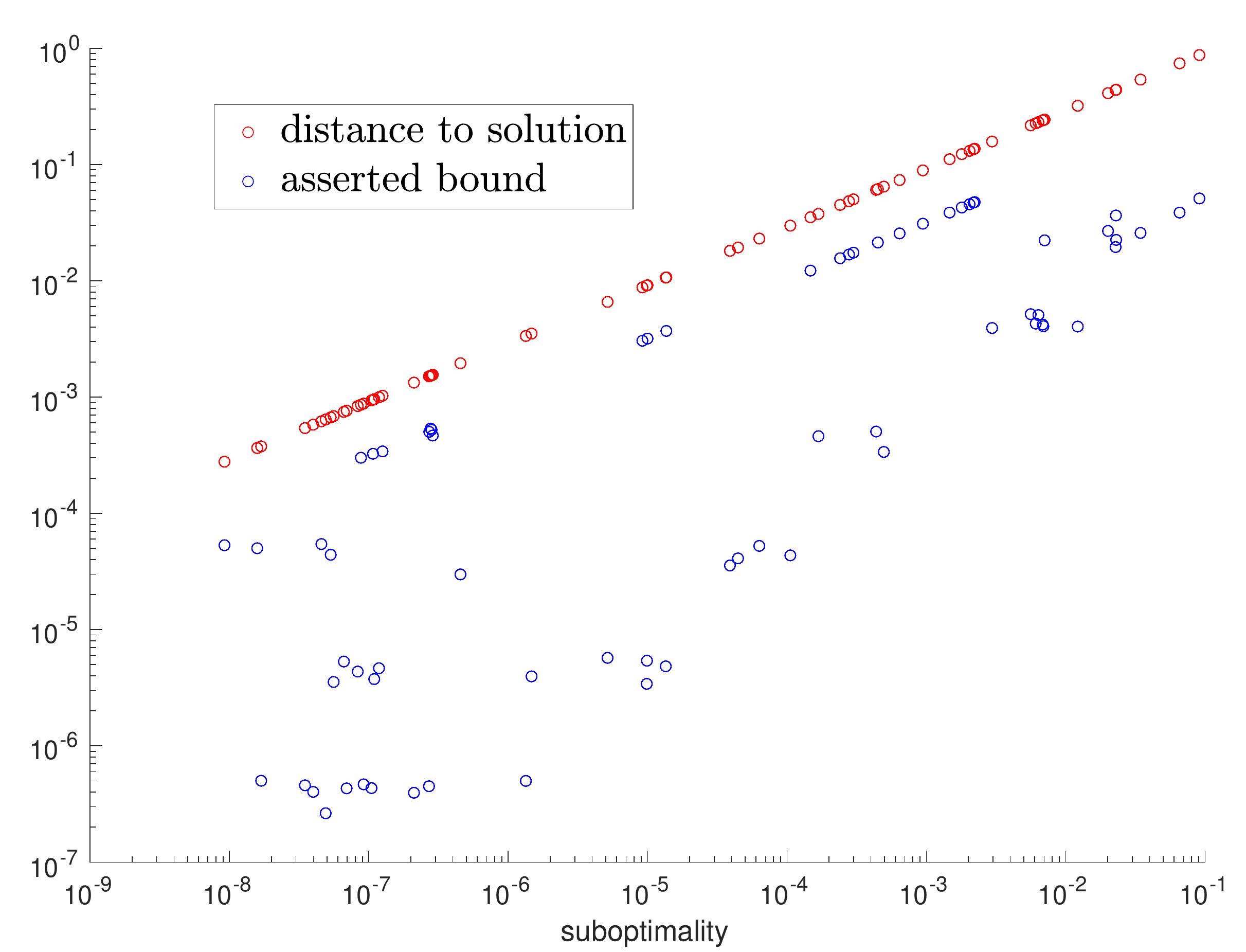}
         \caption{SOCP}
         \label{fig: SOCP}
     \end{subfigure}
     \hfill
     \begin{subfigure}[b]{0.48\textwidth}
         \centering
         \includegraphics[width=\textwidth]{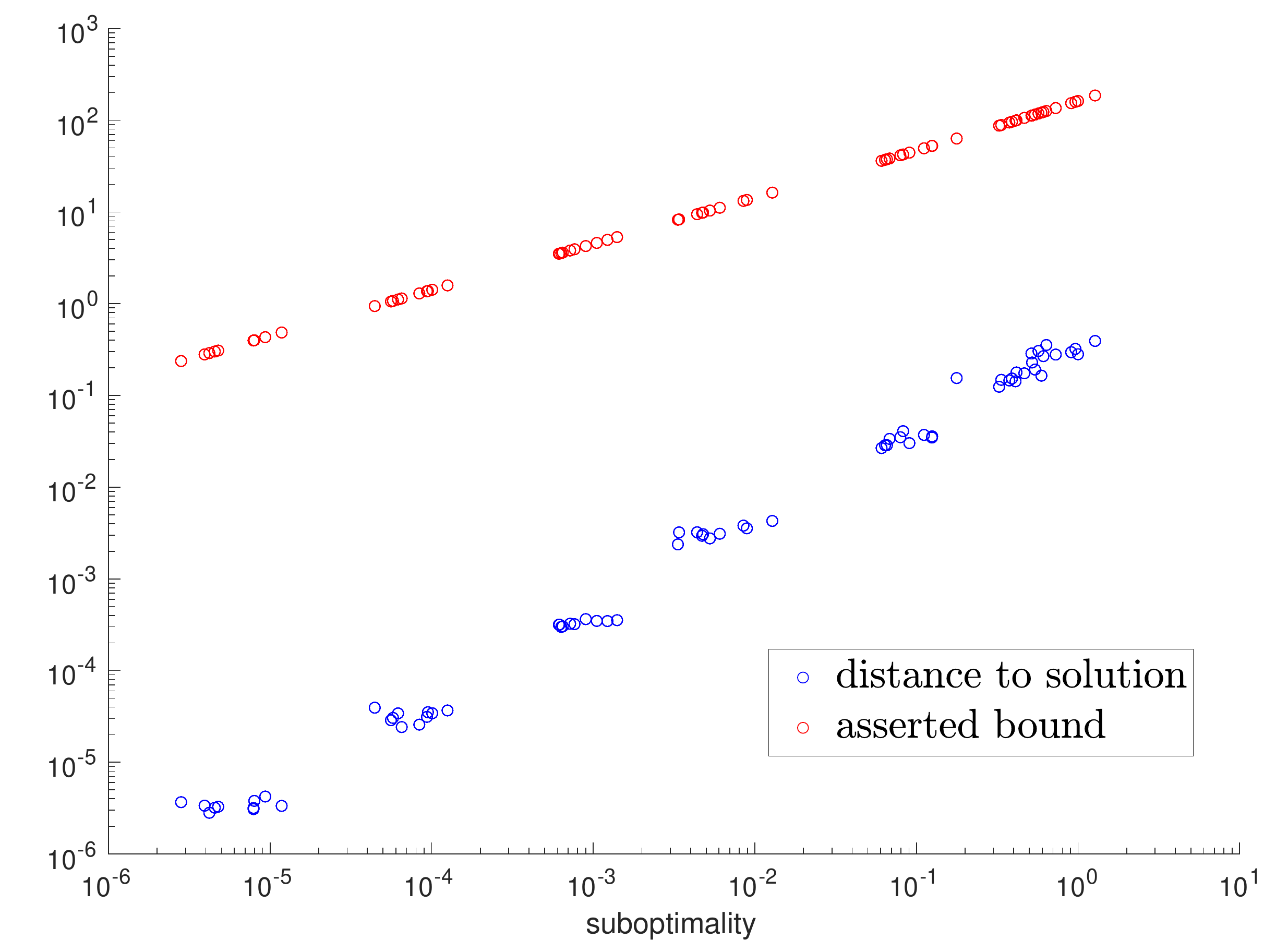}
         \caption{SDP}
         \label{fig: SDP}
     \end{subfigure}
        \caption{Verification of the inequality \eqref{eqn: errorboundLPSOCPSDPandOthersFeasibleNorm}. The asserted bound in red is $(1+\gamma\sigma_{\max}(\Amap))f(\optgap(x),\norm{x})$. The blue points represent suboptimality versus distance to solution.}
        \label{fig:three graphs opt distance bound}
\end{figure}
\end{document}